\documentclass{amsart}    
\usepackage{amsmath} 
\usepackage{paralist}
\usepackage{cancel} 
\usepackage{graphics} 
\usepackage{epsfig} 
\usepackage{graphicx}  
\usepackage{epstopdf} 
\usepackage{mathrsfs,dsfont}
\usepackage[colorlinks=true]{hyperref}
\hypersetup{urlcolor=blue, citecolor=red}

\newtheorem{Definition}{Definition}[section] 
\newtheorem{Proposition}{Proposition}[section]
\newtheorem{Lemma}{Lemma}[section]
\newtheorem{Theorem}{Theorem}[section]
\newtheorem{Corollary}{Corollary}[section]
\newtheorem{Remark}{Remark}[section]

\def \vu{\vec{u}}

\def \vn{\vec{\nabla}}

\def \P{\mathbb{P}}

\def \R{\mathbb{R}}

\def \Rt{\mathbb{R}^3}

\def \ds{\displaystyle}

\def \bl{\textcolor{black}}

\usepackage{amssymb}

	\title[\bf Very weak suitable solutions for the Navier-Stokes equations] %
{A remark on very weak suitable solutions and Leray solutions of the Navier-Stokes equations}  
\author[Oscar Jarr\'in]{}

\subjclass[2020]{Primary:  	35Q30 ; Secondary: 	35B30, 35B45} 
\keywords{Navier-Stokes equations; Very weak solutions ; Suitable solutions ; Leray solutions  ; Local Morrey spaces}

\email{oscar.jarrin@udla.edu.ec}

\thanks{$^*$Corresponding author:  Oscar Jarr\'in}

\begin{document}
	\maketitle

\begin{center}
        \begin{minipage}{5cm}
	\centerline{\scshape Oscar Jarr\'in$^*$}
	\medskip
	{\footnotesize
		\centerline{Escuela de Ciencias Físicas y Matemáticas}
		\centerline{Universidad de Las Américas}
		\centerline{V\'ia a Nay\'on, C.P.170124, Quito, Ecuador}
	} 
\end{minipage}
\end{center}	
	
\bigskip
\begin{abstract} We introduce the notion of very weak suitable solutions for the Navier--Stokes equations. Here, the velocity and the pressure satisfy minimal conditions that make sense of the local energy inequality in the distributional setting.

A well-known but still relevant question is to find sufficient conditions ensuring that very-weak solutions are in fact Leray solutions. Exploiting the \emph{local} energy inequality within the general framework of \emph{local} Morrey spaces, we establish such conditions.

Local Morrey spaces provide a general framework that contains other useful functional settings in the theoretical analysis of the Navier--Stokes equations, such as Lebesgue, Lorentz, homogeneous Morrey, and parabolic Morrey spaces.
	
As a by-product, we also derive some sufficient conditions to study the uniqueness and regularity of the resulting Leray solutions.
\end{abstract}

\section{Introduction}
\subsection{Setting}In this note, we consider the incompressible Navier--Stokes equations posed on the whole three-dimensional space.  For a solenoidal velocity $\vu(t,x)\in \Rt$ and the pressure $P(t,x)\in \R$, with $t>0$ and $x\in \Rt$, these equations take the form

\begin{equation}\label{NS}
\begin{cases}\vspace{2mm}
\partial_t \vu - \Delta \vu + \text{div} (\vu \otimes \vu) + \vn P =0, \quad \text{div}(\vu)=0, \\
\vu(0,\cdot)=\vu_0,  \quad \text{div}(\vu_0)=0,
\end{cases}
\end{equation}
where $\vu_0(x)\in \Rt$ denotes the initial data. With a slight loss of generality, all physical constants are set equal to one as they do not play any substantial role in our study. 

\medskip

We begin by briefly recalling some well-known definitions of very weak solutions and Leray solutions to equations (\ref{NS}).

\begin{Definition}[Very weak solution: Definition $6.3$ of \cite{PLe}]\label{Def-Very-Weak-Sol}
	Let $0<T<+\infty$ be fixed. We say that $\big(\vu, P\big)$ is a very weak solution of the Navier--Stokes equations (\ref{NS}) on $[0,T]\times \Rt$ if the following statements hold.
	\begin{enumerate}
		\item We have $\ds{\vu_0\in \mathcal{D}'(\Rt)}$, $\ds{\vu  \in L^2_{loc}\big([0,T]\times \Rt\big)}$, and $P \in \mathcal{D}' \big( [0,T]\times \Rt \big)$.
		
		\medskip
		
		\item The map $t\mapsto \vu(t,\cdot)$ is continuous from $]0,T]$ into $\mathcal{D}'(\Rt)$, and we have $\ds{\lim_{t\to 0^{+}}\vu(t,\cdot)= \vu_0}$ in $\mathcal{D}'(\Rt)$. 
		
		\medskip
		
		\item The pair $(\vu, P)$ satisfies equation (\ref{NS}) in the distributional sense, \emph{i.e.}, for any $\vec{\varphi}\in \mathcal{D}\big( [0,T]\times \Rt \big)$ with ${\rm div}(\vec{\varphi})=0$, we have  
		\begin{equation}\label{Distributional-Sense}
		\big\langle  \partial_t \vu - \Delta \vu + {\rm div} (\vu \otimes \vu) , \, \vec{\varphi} \big\rangle_{\mathcal{D}' \times \mathcal{D}}.
		\end{equation}
	\end{enumerate}		
\end{Definition}

In the identity above, note that the third term on the left-hand side is well defined as long as the velocity $\vu$ is locally square integrable in time and space. This is the only assumption on $\vu$. Moreover, the pressure $P$ is implicitly defined by equation (\ref{Distributional-Sense}) as follows: defining $\vec{\Phi}:=-\partial_t \vu + \Delta \vu - \text{div} (\vu \otimes \vu) \in \mathcal{D}'\big( [0,T]\times \Rt\big)$, we have $\vec{\nabla}\wedge \vec{\Phi}=0$, and by \cite[Lemma 6.3]{PLe} there exists $P \in \mathcal{D}'\big( [0,T]\times \Rt\big)$ such that $\vec{\Phi}=\vec{\nabla}P$. Additionally, note that we allow the initial velocity $\vu_0$ to be a general object in $\mathcal{D}'(\Rt)$.

\medskip

On the other hand, when the initial datum satisfies $\vu_0 \in L^2(\Rt)$, since the classical results of J. Leray \cite{Leray} and E. Hopf \cite{Hopf}, the existence of global-in-time weak solutions in the natural energy space is well known. These solutions are the so-called Leray solutions, and they satisfy the following properties.

\begin{Definition}[Leray Solution: Definition $12.1$ of \cite{PLe}]\label{Def-Leray-Solution}
	We say that $(\vu, P)$ is a Leray solution of the Navier-Stokes equations (\ref{NS}) if the following statements hold:
	\begin{enumerate}
		\item For any $T>0$ we have $\vu \in L^\infty\big( [0,T], L^2(\Rt)\big)\cap L^2\big( [0,T], \dot{H}^1(\Rt)\big)$, $P \in L^2\big( [0,T], \dot{H}^{-\frac{1}{2}}(\Rt)\big)$, and $(\vu, P)$ satisfy the equation (\ref{NS}) in the distributional sense (\ref{Distributional-Sense}).

		\medskip
		
		\item For every $0<t\leq T$, the following global energy inequality holds:
		\begin{equation}\label{Energy-Inequality}
		\begin{split}
		\| \vu(t,\cdot)\|^{2}_{L^2}
		+\int_{0}^{t}
		\| \vu(s,\cdot)\|^2_{\dot{H}^1} ds  \leq \| \vu_0 \|^2_{L^2}. 
		\end{split}
		\end{equation}
		
	\end{enumerate}
\end{Definition}

In this context, a well-known but still relevant question in Navier--Stokes theory is to determine sufficient conditions under which a very weak solution of equations (\ref{NS}), in the sense of Definition \ref{Def-Very-Weak-Sol}, becomes a Leray solution in the sense of Definition \ref{Def-Leray-Solution}.

\medskip

Besides the natural assumption on the initial datum that $\vu_0 \in L^2(\Rt)$, these conditions consist of two main hypotheses. On the one hand, for the velocity $\vu \in L^2_{\rm loc}\big( [0,T]\times \Rt \big)$ and for a suitable functional space $E \subset L^2_{\rm loc}\big( [0,T]\times \Rt \big)$, we assume that $\vu \in E$. On the other hand, we also need a precise hypothesis on the limit
\begin{equation}\label{Limit}
\lim_{t\to 0^{+}}\vu(t,\cdot)=\vu_0.
\end{equation}

In order to find some candidates for the space $E$, we recall that  from the works \cite{Ladyzhenskaya,Prodi,Serrin} it is well-known that \emph{Leray solutions} of equations (\ref{NS})  \emph{satisfying the Ladyzhenskaya-Prodi-Serrin criterion} :
\begin{equation}\label{Ladyzhenskaya - Prodi - Serrin criterion}
\vu \in L^p\big( [0,T], L^q(\Rt) \big), \quad \text{with}\quad \frac{2}{p}+\frac{3}{q}=1, \quad 3<q<+\infty,
\end{equation}
are regular, satisfy the global energy equality, \emph{i.e.}, the expression given in (\ref{Energy-Inequality}) with a strict equality, and are unique. 

\medskip

Interestingly, the space $L^p_t L^q_x$ has also been used to study the passage from very weak solutions to Leray solutions of the Navier--Stokes equations. In the following lines, we provide a non-exhaustive list of previous works.

\medskip  

To the best of our knowledge, the property that a very weak solution becomes a Leray solution  was first proven in \cite[Theorem $2$]{Foias}, where $\vu \in L^p\big( [0,T], L^q(\Rt) \big)\cap L^2\big( [0,T], H^1(\Rt)\big)$ with $\frac{2}{p}+\frac{3}{q}<1$, and the limit (\ref{Limit}) is assumed in the weak topology of $L^2(\Rt)$. Note that here $p$ and $q$ satisfy a subcritical case of condition (\ref{Ladyzhenskaya - Prodi - Serrin criterion}).

\medskip

Later, regarding the critical case of condition (\ref{Ladyzhenskaya - Prodi - Serrin criterion}), in \cite[Theorem $5.3$]{Fabes}, this property was proven under the following assumptions: $\vu \in L^p\big( [0,T], L^q(\Rt) \big)$ (with $p$ and $q$ satisfying (\ref{Ladyzhenskaya - Prodi - Serrin criterion})), together with the hypothesis that $\vu$ also solves the mild formulation\footnote{It satisfies: $\vu(t,\cdot)= e^{\Delta t} \vu_0 - \int_{0}^{t} e^{\Delta(t-s)}\P \text{div}(\vu \otimes \vu)(s,\cdot) ds$,  where $\P$ denotes the Leray projector} of the Navier-Stokes equations.  Additionally, for $3<p<+\infty$, we have $\vu_0 \in L^2 \cap L^p(\Rt)$, and the limit (\ref{Limit}) is assumed in the weak topology of $L^2(\Rt)$.

\medskip

Thereafter, in \cite[Proposition $1$]{Giga}, this property was proven by assuming that  $\vu\in  L^p\big( [0,T], L^q(\Rt)\big)$, where $p,q$ satisfy (\ref{Ladyzhenskaya - Prodi - Serrin criterion}), together with the assumptions that
\begin{equation*}
t^{\frac{1}{p}}\vu \in  L^\infty \cap \mathcal{C} \big( [0,T], L^q(\Rt)\big) \quad \text{and} \quad \lim_{t \to 0^{+}} t^{\frac{1}{p}}\| \vu(t,\cdot)\|_{L^q}=0.
\end{equation*}
Moreover, we have $\vu_0 \in L^2(\Rt) \cap L^3(\Rt)$, and (\ref{Limit}) is also assumed in the weak topology of $L^2(\Rt)$.

\medskip

More recently, in \cite[Theorem $1$]{Galdi}, this property was proven under the following assumptions. For a small parameter $\delta>0$, we have $\vu \in  L^p\big( [\delta,T], L^q(\Rt) \big)$, where $p$ and $q$ are given by (\ref{Ladyzhenskaya - Prodi - Serrin criterion}), or $\vu \in  \mathcal{C}\big([\delta, T], L^3(\Rt)\big)$. Additionally, the limit (\ref{Limit}) is assumed in the weak topology of $L^2(\Rt)$, where the initial datum only belongs to $L^2(\Rt)$, and it is also proven that the limit holds in the strong topology.

\medskip

Finally, this property was also proven in the supercritical case of condition (\ref{Ladyzhenskaya - Prodi - Serrin criterion}). In \cite[Lemma $2.1$]{Ding},  we have $\vu \in L^4 \big( [\delta,T], L^4(\Rt)\big)$ and the limit (\ref{Limit}) is assumed in the strong topology of $L^2(\Rt)$. Specifically, this last result is a key ingredient in proving a more general theorem (see \cite[Theorem $1.1$]{Ding}), where this property is proven in $\vu \in  L^{p_k}\big([\delta,T], L^{q_k}(\Rt)\big)$, with a family of parameters $(p_k, q_k)_{k\geq 0}$ satisfying, among other technical assumptions, that $\frac{2}{r_k}+ \frac{3}{q_k}= 1 + \frac{1}{2^{k+2}}$. On the other hand, it is also worth mentioning the work \cite[Theorem $1.1$]{Galdi-L4} where this property is  proven in the space $L^4\big([0,T],L^4(\Rt) \big)$. Additionally, the energy control (\ref{Energy-Inequality}) is satisfied with an equality sign. 

\medskip

Returning to the \emph{Ladyzhenskaya-Prodi-Serrin} criterion, we know that Leray solutions of equations (\ref{NS}) satisfying $\vu \in L^p_t L^q_x$ (with $p,q$ given by (\ref{Ladyzhenskaya - Prodi - Serrin criterion})) are regular. Specifically, from \cite[Corollary $1.4$]{Galdi} we have that  $\vu \in \mathcal{C}^\infty\big( ]0,T]\times \Rt \big)$ and consequently $P$ is also a smooth function. Therefore, they satisfy the \emph{local energy equality}:
\begin{equation*}
\partial_t |\vu|^2 + 2|\vn \otimes \vu |^2 - \Delta|\vu|^2 + \text{div}\Big( |\vu|^2 \vu  + 2 P \vu\Big)=0.
\end{equation*}

This remark motivates us to introduce the following definition:

\begin{Definition}[Very weak suitable solution]\label{Def-Weak-Suitable-Sol}
	Let $\big(\vu,  P\big)$ be a very-weak solution of the Navier-Stokes equations (\ref{NS}) in the sense of Definition \ref{Def-Very-Weak-Sol}. We will say that $\big(\vu,  P\big)$ is a very weak \emph{suitable} solution if the following statements hold.
	\begin{enumerate}
		\item We have $\vu  \in L^3_{\rm loc}([0,T]\times \Rt)$ and $P \in L^{\frac{3}{2}}_{\rm loc}([0,T]\times \Rt)$.
		
		\medskip
		
		\item The distribution $\mu \in  \mathcal{D}'\big( [0,T]\times \Rt\big)$ given by the expression 
		\begin{equation}\label{Local-Energy}
		\begin{split}
	\mu:=- \partial_t |\vu|^2 - 2|\vn \otimes \vu |^2 + \Delta|\vu|^2 -\, \text{div}\Big( |\vu|^2 \vu  + 2 P \vu\Big),
		\end{split}
		\end{equation}
		defines a non-negative locally finite measure on $[0,T] \times \Rt$. 
	\end{enumerate}			
\end{Definition}

Note that the conditions stated in the first point above are the minimal ones ensuring that the right-hand side in (\ref{Local-Energy}) is well defined in the distributional sense.

\medskip

The non-negativity of the measure $\mu$ given in (\ref{Local-Energy}) is the main feature of this kind of very-weak solutions. This additional property is motivated by the well-known \emph{Cafarelli-Konh-Niremberg} partial regularity theory for the Navier-Stokes equations \cite{Caffarelli}, where a Leray solution (in the sense of Definition \ref{Def-Leray-Solution}) is called \emph{suitable}, provided that the second point above holds. Nevertheless, we emphasize that the definition above does not assume that $\vu \in L^\infty_t L^2_x \cap L^2_t \dot{H}^{1}_x$. Thus, we still work within the framework of very weak solutions given in Definition \ref{Def-Very-Weak-Sol}.

\medskip

Motivated by the previous works on these solutions, it is thus interesting to ask whether very-weak suitable solutions become Leray solutions. The main objective of this note is to study this question. In addition, we show that the additional \emph{suitability} assumption on very-weak solutions allows us to consider considerably more general functional spaces than $L^p_t L^q_x$.

\subsection{Main results} A class of \emph{local Morrey spaces} (see expression (\ref{Local-Morrey-tx}) below for the definition), which, roughly speaking, characterizes the averaged decay properties of functions, has recently attracted attention in the study of the existence of global-in-time weak solutions for the classical Navier--Stokes equations \bl{\cite{BradKu,Fernandez-Lemarie}}, as well as for the coupled system of Magneto-hydrodynamics equations \bl{\cite{Fernandez-Jarrin}}.

\medskip

We thus use these spaces as an interesting and general functional framework to study the property that very weak suitable solutions of the Navier-Stokes equations become Leray solutions.

\medskip 

For $0<T<+\infty$ fixed and for the parameters $\gamma>0$ and $1<p<+\infty$, we define the Banach space  $M^{p,\gamma}_{t,x}([0,T]\times \Rt)$ as the set of functions $\phi \in L^p_{\rm loc}([0,T]\times \Rt)$ which satisfy:
\begin{equation}\label{Local-Morrey-tx}
\| \phi \|_{M^{p,\gamma}_{t,x}}:= \sup_{R\geq 1} \left( \frac{1}{R^\gamma} \int_{0}^{T} \int_{|x|<R} |\phi(t,x)|^p \, dx dt \right)^{\frac{1}{p}}<+\infty. 
\end{equation}

In  expression (\ref{Local-Morrey-tx}), the parameter $\gamma >0$ controls the asymptotic behavior, as $R \to +\infty$, of the local temporal-spatial $L^p$-norm of the function $\phi$.  Later, we describe some functional spaces that are contained within the class of local Morrey spaces, including the Lebesgue, Lorentz, and classical homogeneous Morrey spaces.

\medskip

Now, we are able to state our main result. 
\begin{Theorem}\label{Th1} Let $(\vu,  P)$ be a  very  weak  suitable solution of the Navier-Stokes equations  (\ref{NS})  in the sense of Definition \ref{Def-Weak-Suitable-Sol}. 
	
\medskip	

For the velocity $\vu$, assume that the following statements hold:
\begin{enumerate}
\item For the initial datum we have  $\vu_0 \in L^2(\Rt)$. Additionally, for any compact set $\Omega\subset \Rt$, the map $t \mapsto \vu(t,\cdot)$ in continuous in the weak topology of $L^2(\Omega)$ in the interval $0<t\leq T$, and strongly continuous at $t=0$. 

\medskip

\item  For the range of parameters  $0<\gamma<3\leq p<+\infty$, we have
\begin{equation}\label{Relationship}
\vu \in M^{p,\gamma}_{t,x}\big([0,T]\times \Rt\big), \qquad \text{with} \quad 	\frac{\gamma}{p}-\frac{3}{p}+\frac{2}{3}< 0, \quad \text{and} \quad 0<T<+\infty. 
\end{equation}	
\end{enumerate} 

Then $(\vu, P)$ is a Leray solution in the sense of Definition \ref{Def-Leray-Solution}.
\end{Theorem}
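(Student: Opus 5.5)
The approach is to test the local energy inequality (\ref{Local-Energy}) against a cut-off $\theta_R(x)\,\eta(t)$, where $\theta_R(x)=\theta(x/R)$ with $\theta\in\mathcal{D}(\Rt)$, $\theta\equiv 1$ on $|x|<1$, supported in $|x|<2$, and $R\geq 1$. Letting $\eta$ approximate $\mathds{1}_{[0,t]}$, and using the weak $L^2(\Omega)$ continuity for $t>0$ together with strong continuity at $t=0$, the non-negativity of $\mu$ gives for a.e. (indeed every) $0<t\leq T$:
\begin{equation*}
\int |\vu(t)|^2\theta_R\,dx + 2\int_0^t\!\!\int |\vn\otimes\vu|^2\theta_R \leq \int|\vu_0|^2\theta_R + \int_0^t\!\!\int |\vu|^2\Delta\theta_R + \int_0^t\!\!\int \big(|\vu|^2+2P\big)\vu\cdot\vn\theta_R .
\end{equation*}
The goal is to show that the three error terms on the right tend to $0$ as $R\to+\infty$, using only $\vu\in M^{p,\gamma}_{t,x}$. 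Once that is done, monotone convergence yields $\vu\in L^\infty_tL^2_x\cap L^2_t\dot H^1_x$ together with the global energy inequality (\ref{Energy-Inequality}).

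For the velocity terms, I would use Hölder on the annulus-type support $|x|<2R$ with $|\Delta\theta_R|\lesssim R^{-2}$ and $|\vn\theta_R|\lesssim R^{-1}$. For $p\geq 3$ this gives
\begin{equation*}
\int_0^t\!\!\int_{|x|<2R}|\vu|^3 R^{-1}\lesssim R^{-1}\big(TR^3\big)^{1-\frac3p}\big(R^{\gamma}\|\vu\|^p_{M^{p,\gamma}}\big)^{\frac3p}\lesssim R^{-1+3-\frac{9}{p}+\frac{3\gamma}{p}} .
\end{equation*}
The exponent equals $3\big(\frac{\gamma}{p}-\frac3p+\frac23\big)$, which is negative precisely by (\ref{Relationship}). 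The $|\vu|^2\Delta\theta_R$ term is handled the same way and decays even faster: its exponent is $-2+3-\frac6p+\frac{2\gamma}{p}=2\big(\frac{\gamma}{p}-\frac3p+\frac12\big)<0$.

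The main obstacle is the pressure. A very weak solution only determines $P$ up to a function of time, and nothing is assumed about its decay. The first step is therefore to show that, up to such a gauge (which does not change (\ref{Local-Energy}) since $\text{div}\,\vu=0$), $P=\sum_{i,j}\mathcal{R}_i\mathcal{R}_j(u_iu_j)$. The plan for this step has three parts.
\begin{enumerate}
\item I would define $Q=\mathcal{R}_i\mathcal{R}_j(u_iu_j)$, which makes sense for $\vu$ in the local Morrey class because $\gamma<3$ gives enough decay for the Riesz transforms in a weighted $L^{p/2}$ sense (Calderón--Zygmund with Muckenhoupt weights $(1+|x|)^{-\gamma'}$, $\gamma<\gamma'<3$).
\item Then $\vn(P-Q)$ is harmonic in $x$.
\item The growth bounds force $\vn(P-Q)$ to vanish by Liouville's theorem, after using the equation with spatial mollification.
\end{enumerate}
With this representation, the local estimate of the Riesz transform gives $\int_0^T\!\int_{|x|<2R}|P|^{p/2}\lesssim R^{\gamma}$. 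The pressure term then has the same decay as the cubic term, namely $R^{3(\gamma/p-3/p+2/3)}\to0$.

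Finally, with the energy bounds in hand, the remaining facts follow. Interpolation puts $\vu\in L^4_tL^3_x$, so $P\in L^2_t\dot H^{-1/2}$ via Riesz transforms and Sobolev embedding. The distributional equation is already known, and (\ref{Energy-Inequality}) comes from the limit $R\to\infty$. So $(\vu,P)$ is a Leray solution.
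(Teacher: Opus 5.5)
Your proposal is correct and follows essentially the same route as the paper. Both arguments rest on the same steps: a space-time cut-off in the local energy inequality, H\"older on the annulus giving the decay rates $R^{2(\frac{\gamma}{p}-\frac{3}{p}+\frac12)}$ and $R^{3(\frac{\gamma}{p}-\frac{3}{p}+\frac23)}$, and the pressure handled through $\vn P=\vn Q$ (Liouville on the mollified harmonic difference) plus boundedness of the Riesz transforms in the local Morrey setting. Your explicit handling of the time-dependent gauge $c(t)$, and your check that $P\in L^2_t\dot{H}^{-1/2}_x$ via $\vu\in L^4_tL^3_x$, are correct and make explicit points the paper leaves unstated.
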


Note that the sufficient condition (\ref{Relationship}) is imposed on the time interval $[0,T]$ for a \emph{fixed} time $T$. On the other hand, by Definition \ref{Def-Leray-Solution}, we know that Leray solutions are global-in-time and that their properties hold on $[0,T]$ for \emph{every} $T>0$.

\medskip

This is not merely a technical point, since in the proof of Theorem \ref{Th1} we show that $\vu$ satisfies the statements in Definition \ref{Def-Leray-Solution} on $[0,T]$ for the fixed time $T$ appearing in (\ref{Relationship}). Thereafter, by taking $\vu(T, \cdot)\in L^2(\Rt)$ as new initial data, the solution $\vu$ extends globally in time by following the classical Leray program (see \cite[Theorem $12.2$]{PLe}).

\medskip

{\bf Strategy of the proof and some comments}: Following  some of the ideas in \cite{Fernandez-Jarrin} and \cite{Fernandez-Lemarie},  the proof is based on sharp local energy estimates. We construct a well-prepared test function  (defined in expression (\ref{Test-Function}) below), which, in the spatial variable, is localized on the ball $|x|<R$. This function and the positivity  of the measure $\mu$ defined in (\ref{Local-Energy}), essentially lead us  to write 
\begin{equation*}
\begin{split}
&\,  \int_{|x|<\frac{R}{2}} | \vu(t,\cdot)|^2\, dx + 2 \int_{0}^{t}\int_{|x|<\frac{R}{2}} |\vn \otimes \vu(s,\cdot)|^2\, dx ds\\ \lesssim &\,   \| \vu_0\|^2_{L^2}+ T^{\frac{p-2}{p}}\,\, R^{-\frac{1}{3}} \, \| \vu\|^2_{M^{p,\gamma}_{t,x}}\, T^{\frac{p-3}{p}} R^{3\left(\frac{\gamma}{p}-\frac{3}{p}+\frac{2}{3}\right)} \| \vu \|^3_{M^{p,\gamma}_{t,x}}. 
 \end{split}
\end{equation*}
Here, the first point in Theorem \ref{Th1} is essentially technical to ensure that this estimate holds for any time $0<t\leq T$. On the other hand, we observe that the second point is the key assumption to control the right-hand side. Specifically, the global energy estimate (\ref{Energy-Inequality}) is obtained by letting $R\to +\infty$. Therefore, the condition on the parameters $\gamma$ and $p$ given in (\ref{Relationship}) appears naturally.  

\medskip

Additionally,  the constraint $0<\gamma<3\leq p<+\infty$ is due to technical requirements. On the one hand, setting $0<\gamma<3$ ensures   desired properties of Local Morrey spaces. See Lemma \ref{Lem-Tech-2} below. On the other hand, we also need that $3\leq p <+\infty$ to justify some estimates. 

\medskip

When $0<\gamma<3\leq p<+\infty$ is combined with the relationship given in (\ref{Relationship}), the parameter $p$ is ultimately restricted to the range $3 \leq p < \frac{9}{2}$. Here, the lower bound $p=3$ is of particular interest. In fact, from Definition \ref{Def-Weak-Suitable-Sol} we know that the velocity $\vu$ is an $L^3_{\rm loc}$-function. Thus, assuming that $\vu \in M^{3,\gamma}_{t,x}\big([0,T]\times\Rt \big)$, the definition of this space (see (\ref{Local-Morrey-tx})) yields, for any $0< T_1<T_2\leq T$ and $R\geq 1$,
\[  \left( \int_{T_1}^{T_2}\int_{|x|<R} | \vu(t,x)|^3\, dx dt  \right)^{\frac{1}{3}} \lesssim R^{\frac{\gamma}{3}}.  \]
We thus observe that the condition $\vu \in  M^{3,\gamma}_{t,x}\big([0,T]\times\Rt \big)$, or more generally $\vu \in M^{p,\gamma}_{t,x}\big([0,T]\times\Rt \big)$ with $p\geq 3$, essentially provides control of these local quantities which, together with the key suitability assumption, is sufficient to ensure that $\vu$ becomes a Leray solution.

\medskip

{\bf Some applications}: As mentioned, the assumption given in (\ref{Relationship}) gives a sufficiently general framework for the velocity $\vu$ to become a Leray solution.  In the following lines we give some examples of functional spaces, characterized by the parameters $p,q$, which are contained in the local Morrey space  $M^{p,\gamma}_{t,x}\big([0,T]\times \Rt \big)$. 

\medskip

The embeddings below  of course hold for more general ranges $p$ and $q$.  Nevertheless, we are only interesting in embeddings in this  space when  $p$ and $\gamma$  satisfy the relationship given in (\ref{Relationship}).  This fact leads to some restrictions on $p,q$ as detailed below. 

\medskip

Additionally,  for the reader's convenience, we recall the definition of  some these  functional spaces characterized by $p,q$. Additionally, the embeddings presented below  are proven in Appendix \ref{AppendixA}.  

\medskip

\begin{itemize} 
\item \emph{Lebesgue, Lorentz and Morrey spaces with $p\neq q$}. For a measurable function $\phi: \Rt \to \R$ and a parameter  $\lambda\geq0$, define the distribution function $d_\phi(\lambda)$ and the re-arrangement function $\phi^{*}$ by
\[  d_\phi(\lambda):= dx\big(\left\{ x \in \Rt: \, |\phi(x)|\geq \lambda \right\} \big) \quad \text{and} \quad \phi^{*}(z):=\inf\left\{ \lambda\geq 0:\, d_\phi(\lambda) \leq z \right\},  \]
respectively, where $dx$ denotes the Lebesgue measure. Then, for $1\leq q < +\infty$ and $1\leq r \leq +\infty$ the Lorentz space $L^{q,r}(\Rt)$ is the defined by the \emph{norm}\footnote{The quantity $\| \phi \|_{L^{q,r}}$ is often used as a norm. Nevertheless, it does not satisfy the triangle inequality.} 
\begin{equation*}
\| \phi \|_{L^{q,r}}:= \begin{cases}\vspace{2mm}
\ds{\frac{r}{q} \left( \int_{0}^{+\infty} \big( z^{\frac{1}{q}} \phi^{*}(z) \big)^{r} \, dz \right)^{\frac{1}{r}}}, & r<+\infty, \\
\ds{\sup_{z>0} \left( z^{\frac{1}{q}}\, \phi^{*}(z)\right)}, & r=+\infty. 
\end{cases}
\end{equation*} 

Then, for $3\leq p<q<\frac{9}{2}$ and $q<r\leq +\infty$ we have  
	\begin{equation}\label{Embedding1}
	L^p\big( [0,T], L^q(\Rt) \big) \subset L^p\big( [0,T], L^{q,r}(\Rt)\big)  \subset M^{p,\gamma}_{t,x}\big( [0,T]\times \Rt \big).  
	\end{equation}
	
On the other hand, 	 for  $1<p<q<+\infty$, the homogeneous Morrey space $\dot{M}^{p,q}(\Rt)$ is defined as the set of $L^p_{\rm loc}$-functions with the finite norm:
\begin{equation}\label{Homogeneous-Morrey}
\| \phi \|_{\dot{M}^{p,q}}:= \sup_{x_0 \in \Rt,\, R>0} R^{\frac{3}{q}} \,\left(  \frac{1}{R^3} \int_{|x-x_0|<R} |\phi(x)|^p\, dx \right)^{\frac{1}{p}}.
\end{equation}

Then,   for $3\leq p<q<\frac{9}{2}$, we have
\begin{equation}\label{Embedding2}
L^p\big( [0,T], \dot{M}^{p,q}(\Rt)\big) \subset M^{p,\gamma}_{t,x}\big( [0,T]\times \Rt\big).
\end{equation}	

Comparing with the previous related works, which essentially consider the functional $L^p_t L^q_x$,   the main interest in  embeddings (\ref{Embedding1}) and (\ref{Embedding2}) is  the fact that the $L^q_x$-space can be now substituted by a Lorentz spaces and a Morrey space. 

\medskip

\item \emph{Parabolic Morrey spaces}.  For  $1<p<q<+\infty$, the parabolic Morrey space $\mathcal{M}^{p,q}_{t,x}\big( \R \times \Rt\big)$ is defined as the set of   $(L^p_{t,x})_{\rm loc}$-functions with finite norm:
\begin{equation}\label{Parabollic-Morrey}
\| \phi \|_{\mathcal{M}^{p,q}_{t,x}}:= \sup_{x_0 \in \Rt, t_0 \in \R, R>0} \left( \frac{1}{R^{5\left( 1- \frac{p}{q}\right)}} \int_{|t-t_0|<R^2}\,\int_{|x-x_0|<R} |\phi(t,x)|^p\, dxdt \right)^{\frac{1}{p}}. 
\end{equation}

These spaces appear as a useful framework when studying qualitatively properties (related to the partial regularity theory) of solutions to the  Navier-Stokes equations \cite{Kukavica} and related models  \cite{Chamorro-Je,Chamorro-Mindrila,Chamorro-Llerena}. 

\medskip

In the following technical lemma, we show that  one can also consider these spaces as a sufficient condition  within  the setting of Theorem \ref{Th1}. 

\begin{Lemma}\label{Lem-Tech-Parabolic-Morrey} Let $0<T\leq 2$.  For $3\leq p<q<\frac{9}{2}$ assume that  $\mathds{1}_{[0,T]}\vu \in \mathcal{M}^{p,q}_{t,x}\big( \R \times \Rt\big)$. Then we have that $\vu \in M^{p,\gamma}_{t,x}\big( [0,T]\times \Rt \big)$ with $p$ and $\gamma$ satisfying (\ref{Relationship}). 
\end{Lemma}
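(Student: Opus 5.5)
We take $t_0=0$, $x_0=0$ and, for a given $R\geq 1$, the parabolic radius $R$ itself in the definition (\ref{Parabollic-Morrey}). Since $R\geq 1$ and $0<T\leq 2$, we have $[0,T]\subset\,]-R^2,R^2[$ whenever $R^2>2$. The remaining range $1\leq R\leq \sqrt 2$ is handled the same way after enlarging the parabolic radius to $R'=\sqrt 2\,R$, at the cost of a harmless multiplicative constant. Then, for every $R\geq 1$,
\[
\int_0^T\int_{|x|<R}|\vu(t,x)|^p\,dx\,dt\leq \int_{|t|<R'^2}\int_{|x|<R'}|\mathds{1}_{[0,T]}\vu|^p\,dx\,dt\leq R'^{\,5\left(1-\frac{p}{q}\right)}\,\|\mathds{1}_{[0,T]}\vu\|^p_{\mathcal{M}^{p,q}_{t,x}} \lesssim R^{5\left(1-\frac{p}{q}\right)}\|\mathds{1}_{[0,T]}\vu\|^p_{\mathcal{M}^{p,q}_{t,x}}.
\]
Hence $\vu\in M^{p,\gamma}_{t,x}([0,T]\times\Rt)$ with $\gamma:=5\left(1-\frac{p}{q}\right)$. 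Since $p<q$, this $\gamma$ is strictly positive, and clearly $\gamma<5$.

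The remaining task is to check the constraints of Theorem \ref{Th1}: $0<\gamma<3\leq p$ and $\frac{\gamma}{p}-\frac{3}{p}+\frac{2}{3}<0$. For $\gamma=5(1-\frac{p}{q})$, the condition $\gamma<3$ is equivalent to $q<\frac{5p}{2}$. Multiplying the second condition by $p$ gives $\gamma<3-\frac{2p}{3}$, that is,
\[
5-\frac{5p}{q}<3-\frac{2p}{3}\iff 2+\frac{2p}{3}<\frac{5p}{q}\iff q<\frac{15p}{6+2p}.
\]
Here we expect the main obstacle. For $p=3$ this requires $q<\frac{45}{12}=3.75$, and the bound decreases toward $\frac{15\cdot 4.5}{15}=4.5$ only at the endpoint. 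The hypothesis $3\leq p<q<\frac92$ therefore does not seem to guarantee the inequality for this naive choice of $\gamma$.

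Before settling on the naive choice, we would exploit the freedom that $\gamma$ may be taken larger than the exponent actually realized. More usefully, we would exploit the fact that time is confined to $[0,T]$ with $T\leq 2$ while $R\to\infty$. The idea is to cover $[0,T]\times B_R$ by parabolic cylinders of radius $r=1$. This uses about $R^3$ cylinders, each contributing at most $\|\cdot\|^p$, which gives $\gamma=3$ — not better. Comparing the two coverings, the single large cylinder wins, so the intended exponent is $\gamma=5(1-\frac pq)$.

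We would then rederive the admissible range directly from (\ref{Relationship}). This yields the condition $q<\frac{15p}{6+2p}$, to be verified against the stated range; note that $\frac{15p}{6+2p}\geq p$ requires $p\leq\frac92$. If the stated range $3\leq p<q<\frac92$ turns out to be too generous, the fallback is to present the lemma under the precise admissible condition $p<q<\min\left(\frac{5p}{2},\frac{15p}{6+2p}\right)$. That condition is nonempty exactly when $p<\frac92$, which matches the restriction $3\leq p<\frac92$ noted after Theorem \ref{Th1}.
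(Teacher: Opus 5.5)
Your argument is the paper's: a single parabolic cylinder of radius comparable to $R$, whose time interval contains $[0,T]$, yields $\int_0^T\int_{|x|<R}|\vu|^p\,dx\,dt\lesssim R^{5(1-\frac{p}{q})}\|\mathds{1}_{[0,T]}\vu\|^p_{\mathcal{M}^{p,q}_{t,x}}$. The paper centers the cylinder at some $t_0\in[T-R^2,R^2]$ rather than enlarging the radius to $\sqrt{2}R$, which is a cosmetic difference.

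Your diagnosis of the parameter range is also correct, and it is exactly what the paper's own remark computes. Requiring $\gamma\geq 5(1-\frac{p}{q})$ together with (\ref{Relationship}) forces $\frac{2}{p}+\frac{2}{3}<\frac{5}{q}$, i.e. $q<\frac{15p}{6+2p}$. The paper only uses this to deduce $q<\frac{9}{2}$, but the converse fails. So both proofs cover precisely $3\leq p<q<\frac{15p}{6+2p}$. Two small points: the $\min$ with $\frac{5p}{2}$ in your fallback is redundant, and $\frac{15p}{6+2p}$ increases, rather than decreases, from $\frac{15}{4}$ to $\frac{9}{2}$.

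Your covering comparison only shows that those particular coverings do no better. To settle the hedge, use a sharpness example. Take $a=\frac{5}{q}-\frac{2}{p}\in\,]0,\frac{3}{p}[$, which is valid since $p<q<\frac{5p}{2}$, and set $\phi(t,x)=\mathds{1}_{[0,T]}(t)\,(1+|x|)^{-a}$. Since balls centered at the origin maximize the integral of a radially decreasing function, $\int_{|t-t_0|<r^2}\int_{|x-x_0|<r}|\phi|^p$ is $\lesssim r^5$ for $r<1$ and $\lesssim T\,r^{5(1-\frac{p}{q})}$ for $r\geq 1$. Hence $\phi\in\mathcal{M}^{p,q}_{t,x}$. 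On the other hand, $\int_0^T\int_{|x|<R}|\phi|^p\simeq T\,R^{5(1-\frac{p}{q})}$ for $R\geq 1$. Thus $\phi\in M^{p,\gamma}_{t,x}$ if and only if $\gamma\geq 5(1-\frac{p}{q})$. For $(p,q)=(3,4)$, which the statement allows, an admissible $\gamma$ would need $\frac{5}{4}\leq\gamma<1$. The lemma is therefore false as stated in that range, and your restricted version is the correct one.
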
	
Here, the constraint $T\leq 2$ is essentially technical, as will be explained later in the proof of this lemma, which is given in Appendix \ref{AppendixA}. But, as explained below Theorem \ref{Th1}, these sufficient conditions for very weak suitable solutions to become Leray solutions are only required on a (possibly small) time interval.

\medskip

\item \emph{Lebesgue spaces and weighted Lebesgue spaces with $p=q$}.  For $\gamma> 0$, we define the weight function
\begin{equation}\label{Weighted-Lebesgue}
w_\gamma(x):= \frac{1}{(1+|x|)^\gamma},
\end{equation}
and for $1\leq p \leq +\infty$, we define the  weighted Lebesgue space  $L^p_{\gamma}(\Rt):= L^p(w_\gamma\, dx)$. 

\medskip

Then, for $0<\gamma<3\leq p<+\infty$ always satisfying (\ref{Relationship}),  we have
\begin{equation*}
L^p\big( [0,T], L^p(\Rt) \big) \subset L^p\Big( [0,T], L^p_\gamma (\Rt) \Big)\subset  M^{p,\gamma}_{t,x}\big( [0,T]\times \Rt \big). 
\end{equation*}
\end{itemize}	
For a reference of the proof of the last embedding see Lemma \ref{Lem-Tech-1} below. 

\medskip

Also returning to the previous works, when consider the  space $L^p_t L^p_x$, the main interest of this embedding follows from the fact that the $L^p_x$-space is now substituted by the more general $(L^p_\gamma)_x$-space. In this context, the value $p=4$ is of particular interest in connection with \cite[Lemma $2.1$]{Ding} and \cite[Theorem $1.1$]{Galdi}. 

\medskip

{\bf Characterization of the pressure term}: Observe that the local energy inequality given in expression (\ref{Local-Energy}) considers the pressure term $P$. To control this term, it is well-known that from the  divergence-free property of the velocity $\vu=(u_1, u_2, u_3)$,  we \emph{formally} have that 
\begin{equation}\label{Pressure}
\vn P = \vn \left( \sum_{i,j=1}^3 \mathcal{R}_i \mathcal{R}_j (u_i u_j) \right),
\end{equation}
where $\mathcal{R}_i:= (-\Delta)^{-\frac{1}{2}}\partial_i$ is the Riesz transform. 

\medskip

This characterization of the pressure term is well-understood in the framework of Lebesgue, Lorentz and homogeneous Morrey spaces, see \emph{e.g.} \cite[Lemma $4.2$]{Kato}. Nevertheless, the framework of Theorem \ref{Th1} considers the more general setting of the local Morrey spaces. 

\medskip

In our following (technical) result, under minimal assumptions, we show the validity of this characterization of the pressure term  within this general setting.  This result can also be of independent interest when looking for general frameworks in which the pressure $P$ is related to the velocity $\vu$ by (\ref{Pressure}) and similar expressions. See  for instance \cite{Fernandez-Lemarie-Pressure}. 

\begin{Proposition}\label{Prop:Pressure}  Let $\big(\vu,  P\big)$ be a very weak solution of the  Navier-Stokes equation  (\ref{NS}) in the sense of Definition \ref{Def-Very-Weak-Sol}.
	
\medskip

For the range of parameters $0<\gamma<\frac{3}{2}$ and $2<p<+\infty$, assume that 
\begin{equation}
\vu\in M^{p,\gamma}_{t,x}\big([0,T]\times \Rt \big).
\end{equation}
Then, the pressure $P\in \mathcal{D}'\big( [0,T]\times \Rt\big)$ is necessary related to the velocity $\vu$ by the expression (\ref{Pressure}).
\end{Proposition}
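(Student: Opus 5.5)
The strategy is to define the candidate pressure $Q:=\sum_{i,j}\mathcal{R}_i\mathcal{R}_j(u_iu_j)$ and show that it is a well-defined distribution. We then check that $\vec{\nabla}(P-Q)$ is a harmonic (in $x$) distribution with enough decay that it must vanish. This is the usual argument for the Lebesgue or Morrey case (\cite[Lemma 4.2]{Kato}, \cite{Fernandez-Lemarie-Pressure}), adapted to the local Morrey setting.

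\emph{Step 1: defining $Q$.} Since $\vu\in M^{p,\gamma}_{t,x}$ with $p>2$, Hölder's inequality gives $u_iu_j\in M^{p/2,\gamma}_{t,x}$, and $p/2>1$. The first task is to show that $\mathcal{R}_i\mathcal{R}_j$ acts on $M^{p/2,\gamma}_{t,x}$ when $\gamma<3$.

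Split $u_iu_j=\mathds{1}_{|x|<2}u_iu_j+\mathds{1}_{|x|\geq 2}u_iu_j$. The local part lies in $L^{p/2}_tL^{p/2}_x$, where the Riesz transforms are bounded. For the far part, write it as a dyadic sum over the annuli $2^k\le|x|<2^{k+1}$. Each annulus has $L^{p/2}$ mass $\lesssim 2^{2k\gamma/p}\|\vu\|^2_{M^{p,\gamma}}$, so the dyadic sum converges in $L^{p/2}(w\,dx\,dt)$ for a weight $w=(1+|x|)^{-\delta}$ with $\delta>\gamma$. In other words, $M^{p/2,\gamma}_{t,x}\subset L^{p/2}_tL^{p/2}_{\delta}$ for all $\delta>\gamma$, which is the reverse direction to Lemma \ref{Lem-Tech-1}.

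Now $w_\delta$ is a Muckenhoupt $A_{p/2}$ weight exactly when $\delta<3$, so the Riesz transforms are bounded on $L^{p/2}_\delta$. We can choose $\gamma<\delta<3$, and this defines $Q\in L^{p/2}_tL^{p/2}_{\delta,x}$. The stronger restriction $\gamma<3/2$ is what I expect to need in Step 3.

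\emph{Step 2: the equation for $P-Q$.} Applying $\mathrm{div}$ to the equation in Definition \ref{Def-Very-Weak-Sol}, and using $\mathrm{div}\,\vu=0$, gives $-\Delta P=\sum_{i,j}\partial_i\partial_j(u_iu_j)$. We also have $-\Delta Q=\sum\partial_i\partial_j(u_iu_j)$, which is justified by approximating $u_iu_j$ with truncations $\mathds{1}_{|x|<N}u_iu_j$, where the identity is classical, and passing to the limit in the weighted space.

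It follows that, for each time, $\Delta_x(\vec{\nabla}P-\vec{\nabla}Q)=0$. Rather than working with $P$ directly, I would use $\vec{\Phi}=\vec{\nabla}P$ together with the equation to write
\[
\vec{\nabla}(P-Q)=-\partial_t\vu+\Delta\vu-\mathbb{P}\,\mathrm{div}(\vu\otimes\vu)+\dots
\]
Equivalently, set $\vec{w}:=\vu-\text{(mild-type term)}$. The cleaner route is to test against $\theta(t)$: for each $\theta\in\mathcal{D}(]0,T[)$, the spatial distribution $\vec{H}_\theta:=\int\theta(t)\,\vec{\nabla}(P-Q)\,dt$ is harmonic, hence a smooth harmonic vector field.

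\emph{Step 3: Liouville argument (main obstacle).} The goal is to show that $\vec{H}_\theta$ is a harmonic polynomial of low degree that must vanish.

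Using the equation, $\vec{H}_\theta=\int\big(\theta'\vu+\theta\Delta\vu-\theta\,\mathrm{div}(\vu\otimes\vu)-\theta\vec{\nabla}Q\big)\,dt$. Each term is a derivative of order at most $2$ of a function whose $L^1$ mass on $B_R$ is bounded by a power of $R$. Concretely, $\vu$ contributes $R^{3(1-1/p)+\gamma/p}$, and $u_iu_j$ and $Q$ contribute $R^{3(1-2/p)+2\gamma/p}$.

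Convolving with a fixed mollifier and using the mean-value property, $|\vec{H}_\theta(x)|\lesssim(1+|x|)^{a}$, with $a$ equal to the growth exponent coming from these local $L^1$ masses. This forces $\vec{H}_\theta$ to be a polynomial. To prove it is zero, I would instead average over large balls: by the mean-value property,
\[
\vec{H}_\theta(x_0)=\frac{c}{R^3}\int_{B_R(x_0)}\vec{H}_\theta,
\]
and integrating by parts the derivatives onto a smooth cutoff gains $R^{-1}$ per derivative. The resulting bound is $R^{-3}R^{-1}\,R^{3(1-2/p)+2\gamma/p}$ for the quadratic terms, and similarly for the others. We need these exponents to be negative, which amounts to $2\gamma/p<1+6/p$. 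This is where the hypotheses $\gamma<3/2$ and $p>2$ should close the estimate, possibly after carefully choosing which terms carry two derivatives versus one.

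Once $\vec{H}_\theta\equiv 0$ for every $\theta$, we get $\vec{\nabla}P=\vec{\nabla}Q$ in $\mathcal{D}'$, which is (\ref{Pressure}). The delicate bookkeeping of the exponents in this last step is where I expect the real difficulty to lie.
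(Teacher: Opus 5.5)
Your proof is correct. Up to the final step it is the paper's argument: the same $Q$, the same placement of $Q$ in $L^{\frac{p}{2}}_tL^{\frac{p}{2}}_{\delta}$ with $\gamma<\delta<3$ via weighted ($A_{p/2}$) boundedness of the Riesz transforms (Lemma \ref{Lem-Tech-Q}), and the same identity $\Delta(P-Q)=0$.

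The routes differ in the Liouville step. The paper mollifies $\vn(P-Q)$ in $(t,x)$ and freezes $t$. It then uses maximal-function bounds in the weighted spaces (Lemma \ref{Lem-Tech-Q-2}) to place the result in $L^p_\delta+L^{\frac{p}{2}}_\delta\subset\mathcal{S}'$, deduces it is a polynomial, and rules out nonzero polynomials in that sum because $\delta<3$.

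You instead pair with $\theta(t)$, invoke Weyl's lemma, and show $\vec H_\theta(x_0)=0$ directly from the mean-value property. This needs only the H\"older bounds on the $L^1$ mass of $\vu$, $\vu\otimes\vu$ and $Q$ over balls, not weighted maximal-function estimates. As a result, your intermediate ``polynomial growth implies polynomial'' step can be deleted, and so can the detour through $\P$ and a mild-type term in Step 2.

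To close the argument, average against a smooth radial $\rho_R=R^{-3}\rho(\cdot/R)$ rather than the indicator of $B_R(x_0)$. The identity $\vec H_\theta(x_0)=\int\vec H_\theta\,\rho_R(\cdot-x_0)\,dx$ holds for any normalized radial weight, and the derivatives can then be moved onto $\rho_R$. Since $B_R(x_0)\subset B_{2R}$ for $R\geq\max(1,|x_0|)$, the $\theta'\vu$, $\theta\Delta\vu$, $\theta\,\mathrm{div}(\vu\otimes\vu)$ and $\theta\vn Q$ terms give
\[
|\vec H_\theta(x_0)|\lesssim R^{\frac{\gamma-3}{p}}+R^{-2+\frac{\gamma-3}{p}}+R^{-1+\frac{2\gamma-6}{p}}+R^{-1+\frac{2\delta-6}{p}}\longrightarrow 0 \quad (R\to+\infty).
\]
The binding term is $\theta'\vu$: it carries no spatial derivative and needs exactly $\gamma<3$. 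So the hypothesis $\gamma<\frac{3}{2}$ plays no role in your argument, contrary to what you expected. It plays none in the paper's proof either, whose lemmas only use $0<\gamma<3$.
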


In contrast to Theorem \ref{Th1}, observe that this result uses the more general notion of very weak solutions since here the suitability property (given in Definition \ref{Def-Weak-Suitable-Sol}) is not longer required. 

\medskip

{\bf Corollaries}: It is well-known that uniqueness and regularity issues of Leray solution to the Navier-Stokes equations (\ref{NS}) are challenging open problems in all generality. Within the framework of Leray solutions obtained from Theorem \ref{Th1}, we can give some results in these directions. 

\medskip

Concerning the uniqueness issue,  we have  the following Liouville-type result. 

\begin{Corollary}\label{Cor:Liouville} Assume the same hypothesis of  Theorem \ref{Th1}.  If $\vu_0=0$ then we have $\vu=0$ on $[0,T]\times \Rt$. 
\end{Corollary}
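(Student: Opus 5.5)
The plan is to obtain the Liouville statement directly from Theorem \ref{Th1}. Under the same hypotheses, Theorem \ref{Th1} says that $(\vu,P)$ is a Leray solution in the sense of Definition \ref{Def-Leray-Solution}. The proof of Theorem \ref{Th1} shows that the properties of Definition \ref{Def-Leray-Solution} hold on the fixed interval $[0,T]$ appearing in (\ref{Relationship}). In particular, the global energy inequality (\ref{Energy-Inequality}) holds for every $0<t\leq T$.

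Since $\vu_0=0$, the right-hand side of (\ref{Energy-Inequality}) vanishes. This gives
\[ \|\vu(t,\cdot)\|_{L^2}^2+\int_0^t\|\vu(s,\cdot)\|^2_{\dot H^1}\,ds\leq 0, \qquad 0<t\leq T. \]
Both terms on the left are non-negative, so $\|\vu(t,\cdot)\|_{L^2}=0$ for every $0<t\leq T$. Hence $\vu(t,\cdot)=0$ almost everywhere in $\Rt$ for each such $t$. Together with $\vu(0,\cdot)=\vu_0=0$, this gives $\vu=0$ almost everywhere on $[0,T]\times\Rt$. By the weak continuity in time assumed in the first point of Theorem \ref{Th1}, every representative $\vu(t,\cdot)$ is then the zero distribution.

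The only delicate step is to make sure that (\ref{Energy-Inequality}) really holds pointwise for every $t\in]0,T]$, not just for almost every $t$. The local energy estimate stated in the strategy of the proof handles this. That estimate holds for each $t$, because of the weak $L^2_{\rm loc}$ continuity for $t>0$ and the strong continuity at $t=0$. Letting $R\to+\infty$ in it, the remainder term is of order $R^{-1/3}\cdot R^{3(\gamma/p-3/p+2/3)}$ and tends to zero. What remains is $\int_{|x|<R/2}|\vu(t,x)|^2\,dx\lesssim\|\vu_0\|^2_{L^2}=0$, uniformly in $R$. This alone already forces $\vu(t,\cdot)=0$, without tracking the exact constants.

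If one also wants a statement about the pressure, Proposition \ref{Prop:Pressure} applies whenever $\gamma<3/2$. It gives $P=\sum_{i,j}\mathcal R_i\mathcal R_j(u_iu_j)=0$ up to a function of time. This is not needed for the corollary as stated.
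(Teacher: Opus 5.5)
Your proposal is correct and follows the paper's argument: the paper simply observes that the global energy inequality (\ref{Energy-Inequality}), furnished by Theorem \ref{Th1} on $[0,T]$, forces $\|\vu(t,\cdot)\|_{L^2}=0$ for every $0<t\leq T$ once $\vu_0=0$. Your extra check that the inequality holds for every $t$, via the localized estimate (\ref{Estim-Base}) and $R\to+\infty$, is a reasonable elaboration; note only that the remainder there is the sum $C\,T^{\frac{p-2}{p}}R^{-\frac{1}{3}}\|\vu\|^2_{M^{p,\gamma}_{t,x}}+C\,T^{\frac{p-3}{p}}R^{3(\frac{\gamma}{p}-\frac{3}{p}+\frac{2}{3})}\|\vu\|^3_{M^{p,\gamma}_{t,x}}$ (not a product), and each of its two terms tends to zero.
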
	
We thus show that the unique Leray solution of equation (\ref{NS}) associated with $\vu_0=0$ is the trivial solution. This result is a  direct implication of the global energy control (\ref{Energy-Inequality}) when setting $\vu_0=0$. 

\medskip

It is also natural to ask about the uniqueness of Leray solutions, obtained from Theorem \ref{Th1}, associated with any initial datum $\vu_0 \in L^2(\Rt)$. The setting of the $M^{p,\gamma}_{t,x}$-space used in this theorem alone does not seem to be sufficient to answer this question. Therefore, we need an additional assumption, also formulated within the general framework of local Morrey spaces.

\begin{Corollary}\label{Cor:Uniqueness} Under the same hypothesis of Theorem \ref{Th1}, for $0<T_1 \leq T$ assume in addition  that
\begin{equation}\label{Assumption-Uniqueness-Regularity}
\vu \in M^{q,\delta}_{t,x}\big([0,T_1] \times \Rt \big), \qquad \text{with} \quad  q \geq \frac{3p}{p-2} \quad  \text{and} \quad  \delta >0. 
\end{equation}
Then  $\vu$  is the unique  Leray solution corresponding to $\vu_0$ on $[0,T_1]\times \Rt$.  
\end{Corollary}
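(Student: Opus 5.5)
The plan is a weak--strong uniqueness argument of Prodi--Serrin type, carried out entirely inside the class of Leray solutions. By Theorem \ref{Th1}, $\vu$ is a Leray solution on $[0,T]$. Let $\vec{v}$ be any other Leray solution with the same datum $\vu_0$, and set $\vec{w}:=\vu-\vec{v}$. The aim is to show $\vec{w}=0$ on $[0,T_1]\times\Rt$. The extra hypothesis (\ref{Assumption-Uniqueness-Regularity}) will play the role of the Serrin-type condition on the ``strong'' solution $\vu$. Since $\vec{w}$ is only a difference of weak solutions, I will not differentiate $\|\vec{w}\|_{L^2}^2$ directly. Instead I use the standard identity
\[ \|\vec{w}(t)\|_{L^2}^2 = \|\vu(t)\|^2_{L^2}+\|\vec{v}(t)\|^2_{L^2}-2\langle \vu(t),\vec{v}(t)\rangle. \]
The first two terms are handled by the energy inequalities (\ref{Energy-Inequality}) for $\vu$ and for $\vec{v}$. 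For the cross term I test the equation of $\vu$ against $\vec{v}$ and the equation of $\vec{v}$ against $\vu$, after mollifying in time, as in the classical Serrin argument. This yields
\[ \|\vec{w}(t)\|^2_{L^2}+2\int_0^t\|\vec{w}\|^2_{\dot H^1}\,ds \leq 2\left|\int_0^t\int_{\Rt} (\vec{w}\cdot\vn)\vu\cdot \vec{w}\,dx\,ds\right|, \]
using $\vu(0)=\vec{v}(0)=\vu_0$ and the divergence-free cancellations.

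To make these pairings legitimate, the nonlinear terms $\int(\vec{v}\cdot\vn)\vec{v}\cdot\vu$ and $\int(\vu\cdot\vn)\vu\cdot\vec{v}$ must be finite. This is where $q\geq \frac{3p}{p-2}$ enters. By Definition \ref{Def-Leray-Solution} both solutions lie in $L^\infty_tL^2_x\cap L^2_t\dot H^1_x$, hence in $L^{r}_tL^{s}_x$ with $\frac{2}{r}+\frac{3}{s}=\frac32$. To place $\vu$ in a space where the trilinear form is controlled, I would interpolate the global $L^\infty_tL^2_x\cap L^2_t\dot H^1_x$ bound with the local Morrey bound $\vu\in M^{q,\delta}_{t,x}$ on balls. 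On each ball $|x|<R$ this should give a Serrin-type estimate
\[ \left|\int_0^t\int (\vec{w}\cdot\vn)\vu\cdot\vec{w}\right| \leq \int_0^t \|\vn\otimes\vec{w}\|_{L^2}^{1+\theta}\|\vec{w}\|_{L^2}^{1-\theta}\,\|\vu\|_{\ast}\,ds , \]
with $\theta<1$, where $\|\cdot\|_\ast$ is a local $L^q$-type norm. Young's inequality then absorbs the gradient into the left-hand side. The numerology $q\geq 3p/(p-2)$ should be exactly what makes the exponent of the remaining $\|\vu\|_\ast$ factor integrable in time, mirroring $\frac{2}{r}+\frac{3}{q}\le 1$.

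To pass from local to global I localize with cut-offs $\theta_R$, just as in the proof of Theorem \ref{Th1}. The weight $R^{\delta/q}$ coming from the Morrey norm is paired against tail quantities of $\vec{w}$, namely $\int_0^t\int_{|x|>R/2}(|\vec{w}|^2+|\vn\otimes\vec{w}|^2)$. These tails tend to $0$ as $R\to\infty$ because $\vec{w}\in L^\infty_tL^2_x\cap L^2_t\dot H^1_x$. Once the tails are dispatched, Gr\"onwall's inequality applied to $\|\vec{w}(t)\|_{L^2}^2$ with $\vec{w}(0)=0$ gives $\vec{w}\equiv0$ on $[0,T_1]$.

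The main obstacle is this localization step. The Morrey norm only controls growth $R^{\delta}$ of local integrals, not global integrability, so a naive H\"older bound produces a factor $R^{\delta/q}$ that does not vanish. The remedy I would try first is to split $\vu=\vu\mathds 1_{|x|<R}+\vu\mathds 1_{|x|\geq R}$. The inner part is handled with the $M^{q,\delta}$ bound plus Gr\"onwall. The outer part is handled using $\vu\in L^\infty_tL^2_x\cap L^2_t\dot H^1_x\subset L^{10/3}_{t,x}$, together with the smallness of $\|\vu\mathds 1_{|x|\ge R}\|$ as $R\to\infty$, which follows from dominated convergence. This reduces the whole problem to closing the Gr\"onwall estimate uniformly in $R$.
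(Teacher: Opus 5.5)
\textbf{There is a genuine gap, at exactly the step you flag as the main obstacle, and your proposed remedy does not close it.}

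\emph{The outer piece.} Smallness of $\vu\mathds{1}_{|x|\geq R}$ in $L^{10/3}_{t,x}$, or in any energy-class norm, does not help. To bound $\int_0^t\int|\vec w|\,|\vn\otimes\vec w|\,|\vu|\mathds{1}_{|x|\geq R}$ by the dissipation and $\sup_s\|\vec w(s)\|^2_{L^2}$, H\"older's inequality forces the coefficient into some $L^r_tL^s_x$ with $\frac2r+\frac3s\leq 1$. But $L^{10/3}_{t,x}$ has $\frac2r+\frac3s=\frac32$. Concretely, the natural bound is
\[ \|\vn\otimes\vec w\|_{L^2_{t,x}}\,\|\vec w\|_{L^5_{t,x}}\,\|\vu\mathds{1}_{|x|\geq R}\|_{L^{10/3}_{t,x}}, \]
and $\|\vec w\|_{L^5_{t,x}}$ is not controlled by the energy class.

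\emph{The inner piece.} The Gr\"onwall factor is $\exp\big(C\int_0^{T_1}\|\vu\mathds{1}_{B_R}\|_{L^q}^{2q/(q-3)}\,dt\big)$. The $M^{q,\delta}_{t,x}$ bound only controls this by $\exp\big(CR^{2\delta/(q-3)}\big)$. Any additive tail error, such as $\int_0^t\int_{|x|>R/2}(|\vec w|^2+|\vn\otimes\vec w|^2)$, tends to $0$ with no rate, so it cannot be traded against this growth.

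\emph{Your first step.} The same defect already affects the starting identity. The cross terms $\int_0^t\int(\vec v\cdot\vn)\vec v\cdot\vu$ are only guaranteed finite on bounded sets. By the same exponent count, the energy class does not make them integrable at spatial infinity.

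So ``closing the Gr\"onwall estimate uniformly in $R$'' is not a leftover technicality; it is the whole problem. Weak--strong uniqueness on $\Rt$ needs $\vu$ in a scale-critical space near spatial infinity, and local bounds that are allowed to grow in $R$ cannot supply this. The $M^{q,\delta}_{t,x}$ hypothesis permits polynomial growth of the local $L^q$ norms. The $M^{p,\gamma}_{t,x}$ hypothesis has $p<\frac{9}{2}<5$, which is supercritical.

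\textbf{Comparison with the paper.} The paper takes a different and shorter route. From $q\geq\frac{3p}{p-2}>p$ it records $\vu\in L^p\big([0,T_1],L^{\frac{3p}{p-2}}(\Omega)\big)$ for each bounded $\Omega$. It then applies the bounded-domain weak--strong uniqueness theorem it cites (Theorem 4.2 of Galdi's 2000 notes) on $[0,T_1]\times\Omega$, and exhausts $\Rt$.

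This does not remove your obstacle. That theorem concerns Leray--Hopf solutions of the initial--boundary value problem in $\Omega$: solutions in $L^2_tH^1_0(\Omega)$ with an energy inequality on $\Omega$. Restrictions of whole-space Leray solutions are not of this type. Moreover, uniqueness is not a local property: the pressure is nonlocal, and $\vec w$ satisfies no boundary condition on $\partial\Omega$. So you cannot repair your argument by localizing as the paper does.

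Your Prodi--Serrin computation in the first paragraph closes, with no localization at all, under a global hypothesis such as $\vu\in L^r\big([0,T_1],L^s(\Rt)\big)$ with $\frac2r+\frac3s=1$ and $3<s\leq+\infty$. The assumption in the statement does not provide this.
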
	 

Concerning the regularity issue of Leray solutions obtained from Theorem \ref{Th1}, it can also be improved under the additional assumption used above.

\begin{Corollary}\label{Cor:Regularity} Under the same hypothesis of Theorem \ref{Th1}, assume  (\ref{Assumption-Uniqueness-Regularity}).  Then we have $\vu \in \mathcal{C}^{\infty}\big( ]0,T_1] \times \Rt \big)$.  
\end{Corollary}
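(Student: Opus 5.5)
The plan is to combine Theorem \ref{Th1} with the Caffarelli--Kohn--Nirenberg $\varepsilon$-regularity criterion, using the extra integrability (\ref{Assumption-Uniqueness-Regularity}), and then bootstrap to full smoothness. First, by Theorem \ref{Th1}, $(\vu,P)$ is a Leray solution on $[0,T]\times\Rt$. Since it is also very weak suitable (Definition \ref{Def-Weak-Suitable-Sol}), it is a suitable weak solution in the sense of \cite{Caffarelli}: $\vu\in L^\infty_tL^2_x\cap L^2_t\dot H^1_x$, $P\in L^{3/2}_{\rm loc}$, and the local energy inequality holds with a non-negative measure $\mu$.

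Next I record what the constraint on $q$ means. From $0<\gamma<3\leq p$ and (\ref{Relationship}) we have $3\leq p<\frac{9}{2}$. The map $p\mapsto \frac{3p}{p-2}$ is decreasing, so $q\geq \frac{3p}{p-2}>\frac{27/2}{5/2}=\frac{27}{5}>5$. By the definition (\ref{Local-Morrey-tx}), $\vu\in M^{q,\delta}_{t,x}([0,T_1]\times\Rt)$ gives $\vu\in L^q([0,T_1]\times B_R)$ for every $R\geq1$. Thus $\vu$ is locally in $L^q_{t,x}$ with $\frac{2}{q}+\frac{3}{q}<1$, a subcritical local Serrin-type condition.

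Fix $(t_0,x_0)\in\,]0,T_1]\times\Rt$ and the backward parabolic cylinder $Q_r=]t_0-r^2,t_0[\times B_r(x_0)$, with $r<\sqrt{t_0}$. H\"older's inequality gives
\[
\frac{1}{r^2}\int_{Q_r}|\vu|^3\,dx\,dt\;\lesssim\; r^{-2}\,|Q_r|^{1-\frac{3}{q}}\,\|\vu\|^3_{L^q(Q_r)}\;\lesssim\; r^{3-\frac{15}{q}}\,\|\vu\|^3_{L^q(Q_r)} ,
\]
and this tends to $0$ as $r\to0$ because $q>5$. The pressure contribution $r^{-2}\int_{Q_r}|P|^{3/2}$ must also be made small. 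For this I would use the local decomposition of $P$ on a ball: a Riesz-transform part $\mathcal R_i\mathcal R_j(\chi u_iu_j)$, controlled in $L^{q/2}$ by $\|\vu\|_{L^q}^2$, plus a harmonic remainder in $x$, controlled by the $L^{3/2}$ norm on a larger cylinder via mean-value estimates. The usual iteration, as in CKN or Lin's version, then shows the scaled pressure quantity becomes small at small scales. Hence $(t_0,x_0)$ is a regular point, and $\vu$ is bounded (indeed H\"older continuous) in a neighbourhood of it. Because the cylinders are backward in time, this also covers $t_0=T_1$.

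Finally, once $\vu\in L^\infty_{\rm loc}(]0,T_1]\times\Rt)$, I would bootstrap. To avoid the time-dependent harmonic ambiguity of the pressure, I would identify $P$ with the Riesz-transform expression (\ref{Pressure}). Both $P$ and that expression lie in $L^2_t\dot H^{-1/2}_x$ for a Leray solution, and they differ by the gradient-free quantity $c(t)$, which must vanish in $\dot H^{-1/2}$. Then the standard argument (Serrin's spatial smoothing together with localized parabolic estimates, with $\partial_t\vu$ expressed through the equation) yields $\vu\in\mathcal C^\infty(]0,T_1]\times\Rt)$. I expect the main obstacle to be the pressure: both in making the CKN smallness quantity for $P$ small from the velocity bound alone, and in securing time regularity, which needs the nonlocal pressure representation rather than only local information.
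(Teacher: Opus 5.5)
Your proposal is correct in outline, but it takes a genuinely different route from the paper.

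\textbf{Comparison of the two routes.} The paper never uses the local energy inequality or $\varepsilon$-regularity. It argues in four steps:
\begin{enumerate}
\item $\vu\in M^{q,\delta}_{t,x}$ gives $\vu\in L^q([0,T_1]\times\Omega)$ for every bounded $\Omega$.
\item Since $p<\frac{9}{2}$, one has $q\geq\frac{3p}{p-2}>p$, so this embeds into the Ladyzhenskaya--Prodi--Serrin class $L^p([0,T_1],L^{\frac{3p}{p-2}}(\Omega))$.
\item It then cites a Serrin-type regularity theorem for Leray solutions (Theorem 5.2 of Galdi's notes) on smooth bounded domains.
\item Finally it exhausts $\Rt$.
\end{enumerate}
You instead use the suitability built into Definition \ref{Def-Weak-Suitable-Sol}. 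From $q>\frac{27}{5}>5$ you obtain the CKN/Lin smallness of the scaled quantity $r^{-2}\int\!\!\int(|\vu|^3+|P|^{3/2})$ on backward cylinders at every point of $]0,T_1]\times\Rt$. You handle the pressure by the local Riesz/harmonic splitting and iteration, and then bootstrap.

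The paper's argument is shorter, but everything sits inside the citation. That theorem concerns Leray--Hopf solutions of the problem posed in $\Omega$, with the boundary condition and with the Serrin condition on all of $\Omega$. Applied to the restriction of a whole-space solution, it therefore really calls for an interior version of Serrin's criterion. Your argument supplies exactly such an interior statement. It uses only the local information the hypothesis provides, and it shows that only $q>5$ matters, not the specific pairing of $q$ with $p$.

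\textbf{A step you should make explicit.} In your final bootstrap, local boundedness at each point together with $P=Q$ gives smoothness in $x$. It does not by itself give smoothness in $t$:
\begin{itemize}
\item Time derivatives of the far-field part of $Q$ near $x_0$ are integrals of $\partial_t(u_iu_j)$ over all of $\Rt$.
\item Localized parabolic estimates therefore close only with bounds that are uniform in $x$ on $[t_0,T_1]$.
\item The bounds you get from $M^{q,\delta}_{t,x}$ may degenerate as $|x|\to+\infty$.
\end{itemize}
This is easy to supply. Because $\vu$ is a Leray solution, $\vu\in L^{10/3}_{t,x}$ and $Q\in L^{5/3}_{t,x}$ globally. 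Fix $r=\sqrt{t_0}/2$. The quantity $r^{-2}\int_{t-r^2}^{t}\int_{B_r(x)}(|\vu|^3+|Q|^{3/2})\,dy\,ds$ then tends to $0$ as $|x|\to+\infty$, uniformly in $t\in[t_0,T_1]$, which gives $|\vu|\leq C/r$ outside some ball $B_{R_0}$. Combined with compactness of $[t_0,T_1]\times\overline{B_{R_0}}$, this yields $\vu\in L^\infty([t_0,T_1]\times\Rt)$. The classical whole-space theory then gives $\mathcal{C}^\infty$ regularity up to $t=T_1$, for instance by bootstrapping in the mild formulation, which is available since $P=Q$.
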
	

We observe that uniqueness and regularity hold on the (possibly smaller) time interval $[0,T_1]$ when assumption (\ref{Assumption-Uniqueness-Regularity}) is imposed. In this assumption, in contrast to the relationship (\ref{Relationship}), the parameter $q$ is not necessarily related to the parameter $\delta$.

\section{Preliminaries on the local Morrey spaces} 
We summarize here some known facts on the local Morrey spaces defined by expression  (\ref{Local-Morrey-tx}). We begin by noting that these spaces are closely related to the  weighted Lebesgue spaces, which were defined above by the weight function (\ref{Weighted-Lebesgue}).

\begin{Lemma}[Lemma $2.1$ of \cite{Fernandez-Jarrin}]\label{Lem-Tech-1}  Let $0< \gamma_1<\gamma_2$ and $1<p<+\infty$. the following continuous embeddings hold:
		\[
	L^p\left( [0,T], L^p_{\gamma_1}(\Rt)\right) \subset M^{p,\gamma_1}_{t,x}\big([0,T]\times \Rt \big) \subset L^p\left( [0,T], L^p_{\gamma_2}(\Rt)\right). 
	\]
\end{Lemma}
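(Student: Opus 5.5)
The two embeddings follow from comparing the weight $w_\gamma$ with the cutoff $\mathds{1}_{\{|x|<R\}}R^{-\gamma}$. We prove each inclusion by a direct estimate on the norms. Throughout we use $\|\phi\|^p_{L^p_t L^p_{\gamma}} = \int_0^T\int_{\Rt} |\phi(t,x)|^p (1+|x|)^{-\gamma}\,dx\,dt$.

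\emph{First embedding.} Fix $R\geq 1$. On the ball $|x|<R$ we have $1+|x|< 1+R\leq 2R$, hence $R^{-\gamma_1}\leq 2^{\gamma_1}(1+|x|)^{-\gamma_1}$. Therefore
\[
\frac{1}{R^{\gamma_1}}\int_0^T\int_{|x|<R}|\phi|^p\,dx\,dt \leq 2^{\gamma_1}\int_0^T\int_{\Rt}|\phi|^p\, w_{\gamma_1}(x)\,dx\,dt .
\]
Taking the supremum over $R\geq 1$ gives $\|\phi\|_{M^{p,\gamma_1}_{t,x}}\leq 2^{\gamma_1/p}\|\phi\|_{L^p_tL^p_{\gamma_1}}$.

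\emph{Second embedding.} This is the step that uses $\gamma_1<\gamma_2$. We decompose $\Rt$ dyadically: set $B_0=\{|x|<1\}$ and, for $k\geq1$, $C_k=\{2^{k-1}\leq |x|<2^k\}$. On $B_0$ the weight satisfies $w_{\gamma_2}\leq 1$, so
\[
\int_0^T\int_{B_0}|\phi|^p w_{\gamma_2} \leq \int_0^T\int_{|x|<1}|\phi|^p \leq \|\phi\|^p_{M^{p,\gamma_1}_{t,x}},
\]
using $R=1$. On $C_k$ we have $w_{\gamma_2}\leq 2^{-(k-1)\gamma_2}$. Using $R=2^k$ in the definition,
\[
\int_0^T\int_{C_k}|\phi|^p w_{\gamma_2} \leq 2^{\gamma_2}2^{-k\gamma_2}\int_0^T\int_{|x|<2^k}|\phi|^p \leq 2^{\gamma_2}\,2^{-k(\gamma_2-\gamma_1)}\|\phi\|^p_{M^{p,\gamma_1}_{t,x}} .
\]
Summing over $k\geq 1$ gives a geometric series, which converges precisely because $\gamma_2>\gamma_1$. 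Hence $\|\phi\|^p_{L^p_tL^p_{\gamma_2}}\leq C(\gamma_1,\gamma_2)\,\|\phi\|^p_{M^{p,\gamma_1}_{t,x}}$.

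\emph{Membership and continuity.} Measurability and local integrability are automatic in both directions, since all three spaces consist of $L^p_{loc}$ functions. Both estimates are linear norm bounds, so the embeddings are continuous.

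The only delicate point is the strict inequality $\gamma_1<\gamma_2$ in the second inclusion. When $\gamma_1=\gamma_2$ the series diverges, and indeed functions with $\int_0^T\int_{|x|<R}|\phi|^p\sim R^{\gamma}$ need not lie in $L^p_\gamma$. So the loss of an arbitrarily small amount of decay is unavoidable, and the dyadic summation is exactly where it enters.
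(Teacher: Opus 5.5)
Your proof is correct. The comparison $R^{-\gamma_1}\le 2^{\gamma_1}(1+|x|)^{-\gamma_1}$ on $|x|<R$ gives the first inclusion, and the dyadic-shell decomposition with the summable factor $2^{-k(\gamma_2-\gamma_1)}$ gives the second. The paper does not prove this lemma itself; it only quotes it from Lemma~2.1 of \cite{Fernandez-Jarrin}, and your argument is the standard one for this fact.
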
	

Thereafter, recall the definition of the Riesz transform $\mathcal{R}_i:= (-\Delta)^{-\frac{1}{2}}\partial_i$ and the Hardy--Littlewood maximal function operator 
\begin{equation}\label{HL-Max-Op}
\mathcal{M}(\phi)(x):= \sup_{R>0} \frac{1}{|B(x,R)|}\int_{|x-y|<R} |\phi(y)| \, dy,
\end{equation}
where $| B(x,R) | \simeq R^3$ denotes the Lebesgue measure of the ball $B(x,R)$.

\begin{Lemma}\label{Lem-Tech-2}  Let $0<\gamma<3$ and  $1<p<+\infty$. There exists a numerical constant $C>0$, such that the following estimates hold.
		\begin{enumerate}
		\item $\ds{\|\mathcal{R}_i \phi \|_{L^p_t (L^p_\gamma)_x} \leq C \| \phi \|_{L^p_t(L^p_\gamma)_x}}$ and $\ds{\|\mathcal{M}(\phi)\|_{L^p_t (L^p_\gamma)_x}\leq C \| \phi \|_{L^p_t (L^p_\gamma)_x}}$. 
		
		\medskip
		
		\item $\ds{\|\mathcal{R}_i \phi \|_{M^{p,\gamma}_{t,x}} \leq C \| \phi\|_{M^{p,\gamma}_{t,x}}}$ and $\ds{\| \mathcal{M}(\phi)\|_{M^{p,\gamma}_{t,x}}\leq C \| \phi \|_{M^{p,\gamma}_{t,x}}}$.
	\end{enumerate}
\end{Lemma}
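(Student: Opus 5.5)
The plan is to reduce everything to the weighted-space bounds of the first point, via Muckenhoupt weight theory, and then pass to the local Morrey space using the sandwich of Lemma \ref{Lem-Tech-1}.

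\textbf{Point (1): weighted Lebesgue estimates.} The weight $w_\gamma(x)=(1+|x|)^{-\gamma}$ belongs to the Muckenhoupt class $A_p$ for every $1<p<+\infty$ exactly when $0\leq \gamma<3$. Heuristically, $|x|^{-\gamma}\in A_p$ iff $-3<-\gamma<3(p-1)$. For $w_\gamma$ this is checked directly by splitting balls into those of radius $r\lesssim 1+|x_0|$, where $w_\gamma$ is essentially constant, and those of radius $r\gg 1+|x_0|$. In the second case one computes
\[
\frac{1}{r^3}\int_{B} w_\gamma \simeq r^{-\gamma}, \qquad \frac{1}{r^3}\int_B w_\gamma^{-\frac{1}{p-1}} \simeq r^{\frac{\gamma}{p-1}},
\]
and the integrability of $w_\gamma$ near the relevant region uses $\gamma<3$. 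By the classical weighted theorems (Muckenhoupt for $\mathcal{M}$; Coifman--Fefferman / Hunt--Muckenhoupt--Wheeden for Calder\'on--Zygmund operators, in particular $\mathcal{R}_i$), both operators are bounded on $L^p_\gamma(\Rt)$ for each fixed $t$. These operators act only in $x$, so raising to the power $p$ and integrating in $t\in[0,T]$ gives point (1).

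\textbf{Point (2): local Morrey estimates.} A naive use of Lemma \ref{Lem-Tech-1} alone loses $\gamma$, so I work directly with dyadic pieces. Fix $R\geq 1$ and split $\phi=\phi\mathds{1}_{|y|<2R}+\sum_{k\geq 1}\phi\mathds{1}_{2^kR\leq|y|<2^{k+1}R}$.

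For the local piece, the unweighted $L^p$ boundedness of $\mathcal{R}_i$ and $\mathcal{M}$ gives
\[
\int_0^T\int_{|x|<R}|T(\phi\mathds{1}_{B_{2R}})|^p \leq C\int_0^T\int_{|y|<2R}|\phi|^p \leq C(2R)^\gamma\|\phi\|^p_{M^{p,\gamma}_{t,x}}.
\]

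For the far pieces, with $|x|<R$ and $|y|\sim 2^kR$, both the kernel of $\mathcal{R}_i$ and the maximal function satisfy a pointwise bound
\[
|T(\phi\mathds{1}_{A_k})(x)|\lesssim (2^kR)^{-3}\int_{A_k}|\phi|\,dy \lesssim (2^kR)^{-3/p}\Big(\int_{|y|<2^{k+1}R}|\phi|^p\Big)^{1/p}
\]
by H\"older. For $\mathcal{M}$, only balls of radius $\gtrsim 2^kR$ reach $A_k$, which gives the same bound. Integrating over $|x|<R$ and $t\in[0,T]$, and using Minkowski in $L^p_{t,x}$, the $k$-th term contributes
\[
R^{3/p}(2^kR)^{-3/p}(2^{k+1}R)^{\gamma/p}\|\phi\|_{M^{p,\gamma}_{t,x}} \lesssim R^{\gamma/p}\,2^{k(\gamma-3)/p}\|\phi\|_{M^{p,\gamma}_{t,x}}.
\]
This series is summable precisely because $\gamma<3$. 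Dividing by $R^{\gamma/p}$ and taking the supremum over $R\geq1$ gives point (2).

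\textbf{Main obstacle.} The delicate step is the tail estimate, where the condition $\gamma<3$ enters; the $A_p$ verification for point (1) is standard. The only care needed is that the Riesz kernel bound $|x-y|^{-3}\simeq (2^kR)^{-3}$ is uniform for $|x|<R$ and $|y|\geq 2R$. This uniformity holds, so the argument should go through.
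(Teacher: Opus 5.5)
Your proof is correct. Point (1) follows essentially the route of the reference the paper cites: $w_\gamma\in A_p$ for $0<\gamma<3$, then Muckenhoupt's theorem for $\mathcal{M}$ and the Coifman--Fefferman theorem for $\mathcal{R}_i$. Point (2), however, takes a genuinely different route from the paper.

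\textbf{The paper's route.} It does not estimate directly. It identifies $M^{p,\gamma}_{t,x}$ with the real interpolation space $[L^p_tL^p_x,\,L^p_t(L^p_\delta)_x]_{\gamma/\delta,\infty}$ for some $\gamma<\delta<3$. It then deduces (2) abstractly from boundedness at the two endpoints. The weighted endpoint is exactly point (1) with weight $w_\delta$, so $\gamma<3$ enters only through the need for room to choose $\delta\in(\gamma,3)$ with $w_\delta\in A_p$.

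\textbf{Your route.} You split $\phi$ dyadically around $B_{2R}$. On the local piece you use only the unweighted $L^p$ bound. On the far pieces you use the off-diagonal size estimate: the kernel decay $|x-y|^{-3}$ for $\mathcal{R}_i$, and the fact that only balls of radius $\gtrsim 2^kR$ reach $A_k$ for $\mathcal{M}$. The condition $\gamma<3$ then appears transparently as the summability of $\sum_k 2^{k(\gamma-3)/p}$.

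\textbf{What each buys.} The interpolation argument is operator-agnostic: it applies to any (sub)linear operator bounded on $L^p$ and on some $L^p(w_\delta)$ with $\gamma<\delta<3$, with no kernel information needed. But it rests on the interpolation identity, which the paper only sketches. Proving that identity amounts to a dyadic computation of the $K$-functional very much like yours. Your argument is elementary and self-contained, and it makes point (2) essentially independent of the weighted theory.

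\textbf{One step left implicit.} You do not say what $\mathcal{R}_i\phi$ means for $\phi\in M^{p,\gamma}_{t,x}$, nor justify $\mathcal{R}_i\phi=\mathcal{R}_i(\phi\mathds{1}_{B_{2R}})+\sum_{k\geq1}\mathcal{R}_i(\phi\mathds{1}_{A_k})$. This is the one place where point (1) is genuinely used in your approach.
\begin{itemize}
\item Lemma \ref{Lem-Tech-1} gives $M^{p,\gamma}_{t,x}\subset L^p_t(L^p_\delta)_x$ for $\gamma<\delta<3$.
\item The partial sums converge to $\phi$ in that space by dominated convergence.
\item By point (1), their Riesz transforms therefore converge in $L^p_t(L^p_\delta)_x$, hence in $L^p([0,T]\times B_R)$, since $w_\delta$ is bounded below on $B_R$.
\end{itemize}
For $\mathcal{M}$, pointwise countable subadditivity is enough.
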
	
\begin{proof} The first point was proven in \cite[Lemma 2]{Fernandez-Lemarie}. The second point was essentially proven in  \cite[Proposition $7.1$]{Fernandez-Lemarie-Pressure}. For the reader's convenience, we give a brief sketch  below. 
	
	\medskip
	
The key idea is that fact the local Morrey space $M^{p,\gamma}_{t,x}\big( [0,T]\times \Rt \big)$ can be obtained by real interpolation between the Lebesgue $L^p \big( [0,T], L^p(\Rt)\big)$ and the weighted Lebesgue space $L^p\big( [0,T], L^p_\delta(\Rt)\big)$. Specifically, essentially follows the same lines in the proof of \cite[Proposition $7.1$]{Fernandez-Lemarie-Pressure}, one can prove that 
\[ M^{p,\gamma}_{t,x} = \big[  L^p_t L^p_x, \, L^p_t (L^p_\delta)_x \big]_{\frac{\gamma}{\delta}, \infty}, \quad \text{with} \quad 0<\gamma<\delta,\]
where the norms $\| \cdot \|_{M^{p,\gamma}_{t,x}}$ and $\| \cdot \|_{[L^p_t L^p_x, \,  L^p_t (L^p_\delta)_x]}$ are equivalents.  

\medskip

Recall that the interpolation space $\big[  L^p_t L^p_x, \, L^p_t (L^p_\delta)_x \big]_{\frac{\gamma}{\delta}, \infty}$ is defined as the set of measurable functions $\phi(t,x)$ such that the series  $\ds{\phi = \sum_{k \in \mathbb{Z}} \phi_k}$ convergences in the strong topology of $L^p_t L^p_x + L^p_t (L^p_\delta)_x$. Here, $\phi_k \in L^p_t L^p_x \cap L^p_t (L^p_\delta)_x$ and, for $\gamma<\delta$, we  have $\ds{\sup_{k \in \mathbb{Z}}} \left( 2^{-k\frac{\gamma}{\delta}}\, \max\left( \| \phi_k \|_{L^p_t L^p_x}, \, 2^k \| \phi_k \|_{L^p_t (L^p_\delta)_x} \right) \right) <+\infty$.  Moreover, we define 
\[   \| \phi \|_{[L^p_t L^p_x,\,  L^p_t (L^p_\delta)_x]} := \min_{(\phi_k)_{k\in \mathbb{Z}} \subset L^p_t L^p_x + L^p_t (L^p_\delta)_x}\, \left( \sup_{k\in \mathbb{Z}} \left( 2^{-k \frac{\gamma}{\delta}}\| \phi_k \|_{L^p_t L^p_x}+2^{k \left(1-\frac{\gamma}{\delta} \right)} \| \phi_k \|_{L^p_t (L^p_\delta)_x} \right) \right). \]

Thus, the second point of Lemma \ref{Lem-Tech-2} follows from the equivalence $\| \cdot \|_{M^{p,\gamma}_{t,x}} \simeq \| \cdot \|_{[L^p_t L^p_x,\, L^p_t (L^p_\delta)_x]}$ and the the continuity of the operators $\mathcal{R}_i$ and $\mathcal{M}(\cdot)$ in the spaces $L^p_t L^p_x$ and $L^p_t (L^p_\delta)_x$. In particular, the continuity property in this last space  is ensured by the first point of Lemma \ref{Lem-Tech-2}.  \end{proof}	

\section{Proof of Proposition \ref{Prop:Pressure}}
From the velocity $\vu=(u_1, u_2, u_3)$  we define the expression
\begin{equation}\label{Q}
Q:= \sum_{i,j=1}^{3}\mathcal{R}_i \mathcal{R}_j (u_i u_j).
\end{equation}
Therefore, from the given pressure term $P \in \mathcal{D}'\big( [0,T]\times \Rt \big)$ in the equation (\ref{NS}),  we will prove that
\begin{equation*}
\vn (P-Q)=0.
\end{equation*}

To this end, we divide the proof into the following technical lemmas.

\begin{Lemma}\label{Lem-Tech-Q}
	Assume that $\vu \in M^{p,\gamma}_{t,x}\big( [0,T]\times \Rt \big)$ with $0<\gamma<3$ and $2<p<+\infty$. Then it holds that $Q \in L^{\frac{p}{2}}\left( [0,T], L^{\frac{p}{2}}_{\delta}(\Rt)\right)$ for some $\gamma<\delta <3$.
\end{Lemma}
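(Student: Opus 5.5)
The plan is to pass from $\vu$ to the product $u_iu_j$ in the local Morrey scale, then to a weighted Lebesgue space via Lemma \ref{Lem-Tech-1}, and finally to apply the boundedness of the Riesz transforms from Lemma \ref{Lem-Tech-2}.

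\textbf{Step 1: the product.} For each pair $i,j$ and every $R\geq 1$, H\"older's inequality gives
\[
\frac{1}{R^\gamma}\int_0^T\int_{|x|<R} |u_iu_j|^{\frac{p}{2}}\,dx\,dt \leq \frac{1}{R^\gamma}\int_0^T\int_{|x|<R} |\vu|^{p}\,dx\,dt \leq \| \vu\|^p_{M^{p,\gamma}_{t,x}}.
\]
Hence $u_iu_j \in M^{\frac{p}{2},\gamma}_{t,x}\big([0,T]\times\Rt\big)$ with norm at most $\|\vu\|^2_{M^{p,\gamma}_{t,x}}$. Here $p>2$ ensures that $\frac{p}{2}>1$, so we remain in the admissible range $1<\frac p2<+\infty$ of the preliminary lemmas.

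\textbf{Step 2: embedding into a weighted space.} Since $\gamma<3$, we can fix some $\delta$ with $\gamma<\delta<3$. The second embedding of Lemma \ref{Lem-Tech-1}, applied with exponent $\frac{p}{2}$, $\gamma_1=\gamma$ and $\gamma_2=\delta$, gives
\[
u_iu_j\in L^{\frac{p}{2}}\big([0,T],L^{\frac{p}{2}}_{\delta}(\Rt)\big).
\]

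\textbf{Step 3: the Riesz transforms.} The weight $w_\delta$ with $0<\delta<3$ belongs to the Muckenhoupt class $A_{p/2}$ (indeed to $A_1$). This is exactly the content of the first point of Lemma \ref{Lem-Tech-2}, now applied with exponent $\frac{p}{2}$ and parameter $\delta$. Since the Riesz transforms act only in $x$, we apply that point twice, pointwise in time:
\[
\|\mathcal{R}_i\mathcal{R}_j(u_iu_j)\|_{L^{p/2}_t(L^{p/2}_\delta)_x}\leq C\|u_iu_j\|_{L^{p/2}_t(L^{p/2}_\delta)_x}.
\]
Summing over $i,j$ then gives $Q\in L^{\frac p2}\big([0,T],L^{\frac p2}_\delta(\Rt)\big)$.

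\textbf{Main obstacle: defining $Q$.} The only delicate point is giving meaning to $\mathcal{R}_i\mathcal{R}_j$ on a function that is merely in a weighted space rather than in $L^{p/2}$. I would define it by density: the weighted bound holds on $L^{p/2}\cap L^{p/2}_\delta$, which is dense in $L^{p/2}_\delta$, so the operator extends uniquely by continuity. Equivalently, one can truncate $u_iu_j\mathds{1}_{|x|<N}$ and pass to the limit. In either form, the resulting $Q$ is consistent with the usual Calder\'on--Zygmund singular integral, since $w_\delta$ has enough decay for the kernel to be integrable against such functions.
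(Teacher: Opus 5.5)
Your proof is correct and follows the paper's route. The only difference is the order of the first two steps: the paper embeds $\vu$ into $L^p_t(L^p_\delta)_x$ via Lemma \ref{Lem-Tech-1} and then applies H\"older to $u_iu_j$, whereas you form the product in $M^{\frac{p}{2},\gamma}_{t,x}$ first and embed afterwards, before the same weighted Riesz bound from Lemma \ref{Lem-Tech-2}; your remark on defining $\mathcal{R}_i\mathcal{R}_j$ on $L^{\frac{p}{2}}_\delta$ by density fills in a point the paper leaves implicit.
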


\begin{proof} 
	From the first part of Lemma \ref{Lem-Tech-1} (with $\gamma_1=\gamma$ and $\gamma_2=\delta$), we have  $\vu\in L^{p}\big( [0,T], L^p_\delta(\Rt)\big)$. Then, by applying H\"older's inequality, it follows that $\vu \otimes \vu \in L^{\frac{p}{2}}\big([0,T], L^{\frac{p}{2}}_\delta (\Rt)\big)$. Finally, by the first point of Lemma  \ref{Lem-Tech-2}, we obtain that $Q\in L^{\frac{p}{2}}\left( [0,T], L^{\frac{p}{2}}_{\delta}(\Rt)\right)$.
\end{proof}	

On the other hand, we introduce the following test functions. Let $\alpha(\cdot)\in \mathcal{C}^\infty_0(\R)$ be a positive function such that $\alpha(t)=0$ when $|t|>1$ and $\int_{\R}\alpha(t)\,dt=1$. Similarly, let $\varphi\in \mathcal{C}^{\infty}_{0}(\Rt)$ be a positive function such that $\int_{\Rt}\varphi(x)\,dx=1$. Then, for any $\varepsilon>0$ sufficiently small, we consider the family of approximations of the identity
\[
\frac{1}{\varepsilon^4} \,\alpha\left( \frac{t}{\varepsilon} \right)\varphi \left( \frac{x}{\varepsilon}\right).
\]

\medskip

From the terms $P$ and $Q$, note that we have
\[
\left( \frac{1}{\varepsilon^4}\, \alpha\left( \frac{\cdot}{\varepsilon} \right)\varphi \left( \frac{\cdot}{\varepsilon}\right) \right)\ast \vn P \in \mathcal{D}'\big(]\varepsilon, T-\varepsilon[\times \Rt \big), \quad
\left( \frac{1}{\varepsilon^4}\, \alpha\left( \frac{\cdot}{\varepsilon} \right)\varphi \left( \frac{\cdot}{\varepsilon}\right) \right)\ast \vn Q \in \mathcal{D}'\big(]\varepsilon, T-\varepsilon[\times \Rt \big).
\]
Consequently, for any fixed $\varepsilon < t < T-\varepsilon$, it holds that
\[
\left( \frac{1}{\varepsilon^4} \alpha\left( \frac{\cdot}{\varepsilon} \right)\varphi \left( \frac{\cdot}{\varepsilon}\right) \right)\ast \big( \vn P- \vn Q \big)(t,\cdot):= \big( \vn P- \vn Q \big)_\varepsilon(t,\cdot) \in \mathcal{D}'(\Rt).
\]
In this context, we need to verify that this expression also belongs to the space $L^p_\delta(\Rt)+L^{\frac{p}{2}}_\delta (\Rt)$, where the parameter $\delta$ is given in Lemma \ref{Lem-Tech-Q}.

\begin{Lemma}\label{Lem-Tech-Q-2}
	Let $\vu \in M^{p,\gamma}_{t,x}\big( [0,T]\times \Rt\big)$ (with $0<\gamma<3$ and $2<p<+\infty$), $P \in \mathcal{D}'\big( [0,T]\times \Rt \big)$ given by the  equation (\ref{NS}), and $Q \in L^{\frac{p}{2}}\left( [0,T], L^{\frac{p}{2}}_{\delta}(\Rt)\right)$ from Lemma \ref{Lem-Tech-Q}. Then it holds that $ \big( \vn P - \vn Q \big)_\varepsilon(t,\cdot) \in L^p_\delta (\Rt)+  L^{\frac{p}{2}}_\delta (\Rt)$.
\end{Lemma}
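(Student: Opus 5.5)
We use the equation to write $\vn P$ explicitly in terms of $\vu$, and then estimate each resulting term after mollification. From the construction of the pressure below Definition \ref{Def-Very-Weak-Sol}, we have in $\mathcal{D}'(]0,T[\times\Rt)$
\[
\vn P = -\partial_t \vu + \Delta \vu - {\rm div}(\vu\otimes\vu),
\]
while $\vn Q$ is a distribution built from $Q$ alone. Convolving with the kernel $\theta_\varepsilon(t,x):=\varepsilon^{-4}\alpha(t/\varepsilon)\varphi(x/\varepsilon)$ moves every derivative onto the kernel. For a fixed time $\varepsilon<t<T-\varepsilon$ this gives
\[
(\vn P-\vn Q)_\varepsilon(t,\cdot) = -(\partial_t\theta_\varepsilon)\ast\vu + (\Delta\theta_\varepsilon)\ast\vu - \sum_{i}(\partial_i\theta_\varepsilon)\ast(u_i\vu) - (\vn\theta_\varepsilon)\ast Q .
\]
The first two terms are linear in $\vu$, and we aim to place them in $L^p_\delta(\Rt)$. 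The last two are quadratic, since $Q$ is itself quadratic in $\vu$, and we aim to place them in $L^{\frac p2}_\delta(\Rt)$.

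\emph{Linear terms.} Let $\psi$ denote any of the kernels $\partial_t\theta_\varepsilon$ or $\Delta\theta_\varepsilon$. Then $\psi$ is smooth, and it is supported in $|s|\le \varepsilon$, $|y|\le C\varepsilon$. Hence
\[
|\psi\ast\vu(t,x)| \le C_\varepsilon \int_{t-\varepsilon}^{t+\varepsilon}\int_{|y|\le C\varepsilon}|\vu(t-s,x-y)|\,dy\,ds .
\]
This is bounded by $C_\varepsilon\, \mathcal{M}_x\big(\int_0^T |\vu(s,\cdot)|\,ds\big)(x)$, where $\mathcal{M}_x$ is the spatial Hardy--Littlewood operator (\ref{HL-Max-Op}). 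Alternatively, Hölder's inequality in $(s,y)$ bounds it by $C_\varepsilon \big(\int_0^T\int_{|y|\le C\varepsilon}|\vu(s,x-y)|^p\,dy\,ds\big)^{1/p}$.

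Set $g(x):=\big(\int_0^T |\vu(s,x)|^p\,ds\big)^{1/p}$. By Lemma \ref{Lem-Tech-1} we have $\vu\in L^p_tL^p_{\delta,x}$ for every $\delta>\gamma$, so $g\in L^p_\delta(\Rt)$. The Hölder bound above is at most $C_\varepsilon\,(\mathcal M(g^p))^{1/p}$. To finish, use that $w_\delta(x)\simeq w_\delta(x-y)$ for $|y|\le C\varepsilon$ (with constants depending on $\varepsilon$). Raising the bound to the power $p$, integrating against $w_\delta$ and applying Fubini gives
\[
\|\psi\ast\vu(t,\cdot)\|_{L^p_\delta}\le C_\varepsilon\|\vu\|_{L^p_t L^p_{\delta,x}} .
\]
This avoids the maximal function altogether. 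We fix $\delta$ to be the parameter from Lemma \ref{Lem-Tech-Q}.

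\emph{Quadratic terms.} The argument is identical with $p$ replaced by $\frac p2$. By Hölder's inequality, $u_i\vu\in L^{\frac p2}_tL^{\frac p2}_{\delta,x}$, and by Lemma \ref{Lem-Tech-Q}, $Q\in L^{\frac p2}_tL^{\frac p2}_{\delta,x}$. Here $p>2$ guarantees $\frac p2>1$. Applying the local-average estimate with the kernels $\partial_i\theta_\varepsilon$ and $\vn\theta_\varepsilon$ then puts both terms in $L^{\frac p2}_\delta(\Rt)$.

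\emph{Main obstacle.} The delicate step is justifying the explicit formula for the mollified distribution in the first display: one must check that convolution in time is legitimate. The point is that for $\varepsilon<t<T-\varepsilon$ the kernel $\theta_\varepsilon(t-\cdot,x-\cdot)$ is supported inside $]0,T[\times\Rt$, so it is an admissible test function there. Then $\langle \vn P,\theta_\varepsilon(t-\cdot,x-\cdot)\rangle$ is given by the equation, with the derivatives transferred to the kernel. This holds even though the distributional identity is only tested against divergence-free fields, because we use the representation $\vec\Phi=\vn P$, which holds componentwise as distributions.
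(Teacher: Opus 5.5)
Your proof is correct. It uses the same decomposition as the paper: $-\partial_t\vu+\Delta\vu$ goes into $L^p_\delta(\Rt)$, and the terms from ${\rm div}(\vu\otimes\vu)$ and $\vn Q$ go into $L^{\frac p2}_\delta(\Rt)$. Where you differ is in how you estimate the mollified terms.

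\textbf{The paper's route.} It uses the pointwise bound $|\phi\ast f|\le C_\phi\,\mathcal{M}(f)$. It then invokes the boundedness of the Hardy--Littlewood maximal operator on $L^p_t(L^p_\delta)_x$, which is the first point of Lemma \ref{Lem-Tech-2} and is stated for weights $w_\delta$ with $0<\delta<3$. It leaves implicit how the time part of the space-time convolution is handled at a fixed $t$. One has to apply $\mathcal{M}$ to each slice $\vu(s,\cdot)$ and then use Minkowski and H\"older over the window $|t-s|\le\varepsilon$.

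\textbf{Your route.} You apply H\"older in $(s,y)$ on the compact support of the kernel. You then use the translation bound $w_\delta(z+y)\le(1+|y|)^{\delta}\,w_\delta(z)$ for $|y|\le C\varepsilon$, followed by Fubini.

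\textbf{Comparison.} Your argument is more elementary and more robust:
\begin{itemize}
\item it needs no maximal-function or weighted-norm theory;
\item it works for every $\delta>0$;
\item it works even with exponent $1$, so $p>2$ is not really needed for the quadratic terms;
\item it treats the time variable explicitly.
\end{itemize}
The paper's route buys brevity once Lemma \ref{Lem-Tech-2} is available. Both routes give constants depending on $\varepsilon$. That is harmless, since only membership for each fixed $\varepsilon$ is needed.

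One side remark of yours, bounding the H\"older expression by $(\mathcal{M}(g^p))^{1/p}$, would not close on its own, because $\mathcal{M}$ is unbounded on $L^1_\delta$. You rightly do not rely on it.
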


\begin{proof}
	For simplicity, we write $\big( \vn P - \vn Q \big)_\varepsilon(t,\cdot)$ as $(\alpha \varphi)\ast \big( \vn P  - \vn Q \big)(t,\cdot)$. From equation in (\ref{NS}), we write
	\begin{equation}\label{Eq-nabla-P}
	\vn P = - \partial_t \vu + \Delta \vu - \text{div}(\vu \otimes \vu).
	\end{equation}
	Hence,
	\begin{equation*}
	(\alpha \varphi)\ast \big( \vn P - \vn Q  \big)(t,\cdot)  =  \underbrace{(\alpha \varphi)\ast\Big( -\partial_t + \Delta \Big)\vu(t,\cdot)}_{I_1(t,\cdot)} +  \underbrace{(\alpha \varphi)\ast\Big(-\text{div}(\vu\otimes \vu)\Big)(t,\cdot)}_{I_2(t,\cdot)} - \underbrace{ (\alpha \varphi)\ast \vn Q (t,\cdot)}_{I_3(t,\cdot)},
	\end{equation*}
	where we must verify that each term on the right-hand side belongs to the space $L^{\frac{p}{2}}_\delta(\Rt)$.
	
	\medskip
	
	For $I_1(t,\cdot)$, we write
	\[
	I_1(t,\cdot)= \Big( (-\partial_t \alpha) \varphi + \alpha \Delta \varphi \Big)\ast \vu (t,\cdot).
	\]
	Recall that for $\phi \in \mathcal{C}^{\infty}_0(\Rt)$ and a given function $f$, we have the pointwise control
	\[
	|(\phi \ast f)(x)|\leq C_\phi \mathcal{M}(f)(x),
	\]
	where $C_\phi>0$ is a constant depending on $\phi$, and $\mathcal{M}(f)$ denotes the Hardy--Littlewood maximal operator defined in (\ref{HL-Max-Op}). Therefore, since $\vu \in M^{p,\gamma}_{t,x}\big([0,T]\times \Rt \big)\subset  L^p\big([0,T], L^p_\delta(\Rt)\big)$ (from the first part of Lemma \ref{Lem-Tech-1}), applying the first part of Lemma \ref{Lem-Tech-2}, we obtain that $I_1(t,\cdot) \in  L^p_\delta(\Rt)$.
	
	\medskip
	
	For $I_2(t,\cdot)$, we write
	\[
	I_2(t,\cdot)= (\alpha \vn \varphi) \ast \big(-\vu \otimes \vu\big),
	\]
	and since $\vu \otimes \vu \in L^{\frac{p}{2}}\big( [0,T], L^{\frac{p}{2}}_{\delta}(\Rt)\big)$, by the same arguments as above, we obtain that $I_2(t,\cdot)\in L^{\frac{p}{2}}_{\delta}(\Rt)$.
	
	\medskip
	
	The term $I_3(t,\cdot)$  also follows the same ideas, yielding that  $I_3(t,\cdot)\in L^{\frac{p}{2}}_\delta(\Rt)$. 
\end{proof}	 

{\bf End of the proof of Proposition \ref{Prop:Pressure}}. By Lemma \ref{Lem-Tech-Q-2}, we have $\big( \vn P - \vn Q \big)_\varepsilon(t,\cdot) \in L^p_\delta (\Rt)+  L^{\frac{p}{2}}_\delta (\Rt)$ and therefore $\big( \vn P - \vn Q \big)_\varepsilon(t,\cdot) \in \mathcal{S}'(\Rt)$.

\medskip

On the other hand, from the expression (\ref{Q}) we have
\[
-\Delta Q= \text{div}\,\text{div}\big(\vu \otimes \vu\big).
\]
Moreover, from equation (\ref{Eq-nabla-P}), we write
\[
\vn P + \text{div}(\vu \otimes \vu)= - \partial_t \vu + \Delta \vu,
\]
and since $\text{div}(\vu)=0$, applying the divergence operator, we find that $\Delta(P-Q)=0$, hence
\[
\Delta  \big( \vn P - \vn Q \big)_\varepsilon(t,\cdot)=0. 
\]

Consequently, the expression $ \big( \vn P - \vn Q \big)_\varepsilon(t,\cdot)$ is a polynomial. Since it also belongs to the space $L^p_\delta (\Rt)+  L^{\frac{p}{2}}_\delta (\Rt)$, we conclude that $\big( \vn P - \vn Q \big)_\varepsilon (t,\cdot)=0$. Finally, using the approximation of the identity introduced above, we write
\[
\vn (P-Q)(t,\cdot)= \lim_{\varepsilon\to 0^{+}} \big( \vn P - \vn Q \big)_\varepsilon (t,\cdot)=0.
\]
Proposition \ref{Prop:Pressure} is now proven.

\section{Proof of Theorem \ref{Th1}} 
As mentioned, the main strategy of the proof is to construct a convenient test function $\Phi \in \mathcal{C}^{\infty}_0 \big( [0,T]\times \Rt\big)$ to be applied to the local energy equality (\ref{Local-Energy}).

\medskip

On the one hand, let $\alpha(\cdot)\in \mathcal{C}^{\infty}(\R)$ be a function such that 
\begin{equation*}
\alpha(t)= \begin{cases}
0, & \text{when} \quad -\infty <t < \frac{1}{2}, \\
1, & \text{when} \quad 1 <t < +\infty. 
\end{cases}
\end{equation*}
For $0< t_0<t_1\leq T$ and for a sufficiently small parameter $\varepsilon>0$ (which we will precise later), define the function 
\begin{equation*}
\alpha_{t_0,t_1,\varepsilon}(t):= \alpha\left( \frac{t-t_0}{\varepsilon} \right)- \alpha\left( \frac{t-t_1}{\varepsilon} \right).
\end{equation*}
From the definition of the function $\alpha(t)$ given above, we have the  that  $\ds{\lim_{\varepsilon\to 0^{+}} \alpha_{t_0,t_1,\varepsilon}(t)=\mathds{1}_{[t_0, t_1]}(t)}$ for \emph{a.e.} $t_0\leq t \leq t_1$.  Additionally, always by the definition of $\alpha(t)$, it follows that 
\[ \frac{d}{dt}\alpha_{t_0,t_1,\varepsilon}(t) =  \frac{1}{\varepsilon}\alpha'\left( \frac{t-t_0}{\varepsilon} \right)-\frac{1}{\varepsilon} \alpha'\left( \frac{t-t_1}{\varepsilon} \right),\]
where each term on the right-hand side defines a  family of approximation of the identity at $t_0$ and at $t_1$, respectively.  Finally, we have that $\text{supp}\left( \alpha_{t_0,t_1,\varepsilon}(\cdot) \right)\subseteq \left[ t_0-\frac{\varepsilon}{2}, t_1+\varepsilon \right]$. Consequently,  we assume that  $0<\varepsilon<\min \left( \frac{t_0}{2}, T-t_1 \right)$ yielding that  $\left[ t_0-\frac{\varepsilon}{2}, t_1+\varepsilon \right] \subset [0,T]$. 

\medskip

On the other hand, let $\varphi \in \mathcal{C}^{\infty}_0(\Rt)$ be a positive and radial function such that 
\begin{equation*}
\varphi(x)= \begin{cases}
1, & \text{when}  \quad  |x|< \frac{1}{2}, \\
0, & \text{when}  \quad  |x| \geq 1. 
\end{cases}
\end{equation*}
For a parameter $R\geq 1$, define the function 
\begin{equation*}
\varphi_R(x):= \varphi \left( \frac{x}{R} \right).
\end{equation*}
From the definition of $\varphi(x)$ it follows that $\varphi_R(x)=1$ when $|x|<\frac{R}{2}$, $\varphi_R(x)=0$ when $|x|\geq R$. Additionally, for some constant $C>0$ independent of $R$, we have that  $\ds{\left\| \vn \varphi_R \right\|_{L^\infty} \leq \frac{C}{R}}$ and  $\ds{\| \Delta \varphi_R \|_{L^\infty} \leq \frac{C}{R^2}}$. 

\medskip

With these functions, we then define the test function 
\begin{equation}\label{Test-Function}
\Phi(t,x):= \alpha_{t_0,t_1,\varepsilon}(t)\, \varphi_R(x). 
\end{equation}

Applying this test function into the local energy estimate (\ref{Local-Energy}), recalling that the distribution $\mu$ is a non-negative locally finite measure,  after  rearranging terms  we find that 
\begin{equation*}
\begin{split}
&\, \int_{0}^{T}\int_{\Rt} \partial_t  |\vu|^2\,\alpha_{t_0,t_1,\varepsilon}\varphi_R\, dxdt  + 2\int_{0}^{T}\int_{\Rt}  |\vn \otimes \vu |^2 \,\alpha_{t_0,t_1,\varepsilon}\varphi_R\, dxdt \\
\leq &\, \int_{0}^{T}\int_{\Rt} \Delta |\vu|^2\, \alpha_{t_0,t_1,\varepsilon}\varphi_R\, dxdt - \int_{0}^{T}\int_{\Rt} \text{div}\Big( |\vu|^2 \vu  + 2 P \vu\Big)\, \alpha_{t_0,t_1,\varepsilon}\varphi_R\, dxdt. 
\end{split}
\end{equation*}
Integrating by parts the first, third and fourth term, we obtain 
\begin{equation*}
\begin{split}
&\,- \int_{0}^{T}\int_{\Rt} |\vu|^2\, \left(\frac{d}{dt}\alpha_{t_0,t_1,\varepsilon}\right)\varphi_R\, dxdt  + 2\int_{0}^{T}\int_{\Rt}  |\vn \otimes \vu |^2\,\alpha_{t_0,t_1,\varepsilon}\varphi_R\, dxdt \\
\leq &\, \int_{0}^{T}\int_{\Rt} |\vu|^2\, \alpha_{t_0,t_1,\varepsilon}\,\Delta\varphi_R\, dxdt + \int_{0}^{T}\int_{\Rt} \Big( |\vu|^2 \vu  + 2 P \vu\Big)\, \alpha_{t_0,t_1,\varepsilon} \vn\varphi_R\, dxdt. 
\end{split}
\end{equation*}
Finally, taking the limit as $\varepsilon\to 0^{+}$, from the Lebesgue dominated convergence theorem and the fact that $\ds{\lim_{\varepsilon\to 0^{+}} \alpha_{t_0,t_1,\varepsilon}(t)=\mathds{1}_{[t_0, t_1]}(t)}$, for \emph{a.e.} $t_0\leq t \leq t_1$, we write
\begin{equation*}
\begin{split}
&\,- \lim_{\varepsilon\to 0^{+}} \int_{0}^{T}\int_{\Rt}  |\vu|^2\, \left(\frac{d}{dt}\alpha_{t_0,t_1,\varepsilon}\right)\varphi_R\, dxdt  + 2\int_{t_0}^{t_1}\int_{\Rt} |\vn \otimes \vu |^2 \,\varphi_R\, dxdt \\
\leq &\, \int_{t_0}^{t_1}\int_{\Rt} |\vu|^2\, \Delta\varphi_R\, dxdt + \int_{t_0}^{t_1}\int_{\Rt} \Big( |\vu|^2 \vu  + 2 P \vu\Big) \,  \vn\varphi_R\, dxdt. 
\end{split}
\end{equation*}

We study the first term on the left-hand side separately. To simplify our writing, denote 
\[ \Lambda_R(t):= \int_{\Rt} |\vu(t,x)|^2\, \varphi_R\, dx,  \]
hence we write
\[ \int_{0}^{T}\int_{\Rt}  |\vu(t,x)|^2\, \left(\frac{d}{dt}\alpha_{t_0,t_1,\varepsilon}(t)\right)\varphi_R(x)\, dxdt = \int_{0}^{T} \left(\frac{d}{dt}\alpha_{t_0,t_1,\varepsilon}(t)\right)\, \Lambda_R(t) dt.  \]
Recalling that $\ds{\frac{d}{dt}\alpha_{t_0,t_1,\varepsilon}(t)}$ is the difference between the family of approximation of identity at $t_0$ and at $t_1$, assuming that $t_0$ and $t_1$ are Lebesgue points of the function $\Lambda_R(t)$, we obtain 
\[ \lim_{\varepsilon\to 0^{+}} \int_{0}^{T} \left(\frac{d}{dt}\alpha_{t_0,t_1,\varepsilon}(t)\right)\, \Lambda_R(t) dt= \Lambda_R(t_0)-\Lambda_R(t_1). \]

Returning to the previous estimate, for \emph{a.e.} $t_0 \leq t \leq t_1$ we write
\begin{equation*}
\begin{split}
&\, \int_{\Rt} |\vu(t_1, \cdot)|^2\, \varphi_Rdx  + 2\int_{t_0}^{t_1}\int_{\Rt}  |\vn \otimes \vu(t,\cdot) |^2 \,\varphi_R\, dxdt \\
\leq &\int_{\Rt} |\vu(t_0, \cdot)|^2\, \varphi_Rdx + \int_{t_0}^{t_1}\int_{\Rt}  |\vu|^2\, \Delta\varphi_R\, dxdt + \int_{t_0}^{t_1}\int_{\Rt} \Big( |\vu|^2 \vu  + 2 P \vu\Big)\,  \vn\varphi_R\, dxdt. 
\end{split}
\end{equation*}

Now, we study more in detail the times $t_0$ and $t_1$. For the time $t_0$, recall that by the first assumption in Theorem \ref{Th1} we have that $\vu(t,\cdot)$  are strongly continuous at $t=0$. Therefore, we can let $t_0\to 0^{+}$. For the time $t_1$, always by this assumption we have that  $\vu(t,\cdot)$ is weakly continuous  in $(0,T]$.   Consequently, for any $0<t\leq T$  we have  $\ds{\Lambda_R(t) \leq \liminf_{t_1 \to t} \Lambda_R(t_1)}$,  and we can substitute $t_1$ by $t$. 

\medskip

With  these facts, for  $0<t\leq T$, we obtain
\begin{equation*}
\begin{split}
&\, \int_{\Rt} |\vu(t, \cdot)|^2\, \varphi_Rdx  + 2\int_{0}^{t}\int_{\Rt}  |\vn \otimes \vu(s,\cdot) |^2 \,\varphi_R\, dxds \\
\leq &\int_{\Rt} |\vu_0|^2\, \varphi_Rdx + \int_{0}^{t}\int_{\Rt}  |\vu|^2\, \Delta\varphi_R\, dxds + \int_{0}^{t}\int_{\Rt} \Big( |\vu|^2 \vu  + 2 P \vu\Big)\, \vec{\nabla}\varphi_R\, dxds. 
\end{split}
\end{equation*}

Finally, recalling the localization properties of the test function $\varphi_R$, where we denote $B_{R}:= \left\{ x \in \Rt: \, |x|<R \right\}$ and $C_R:= \left\{ x \in \Rt: \, \frac{R}{2}<|x|<R \right\}$, and as $\vu_0\in L^2(\Rt)$,  for any $R\geq 1$ we have
\begin{equation}\label{Estim-Base}
\begin{split}
&\, \int_{B_{\frac{R}{2}}} |\vu(t, \cdot)|^2 \,dx  + 2\int_{0}^{t}\int_{B_{\frac{R}{2}}}|\vn \otimes \vu(s,\cdot) |^2 \, dxds \\
\leq &\int_{B_R} |\vu_0|^2\,\varphi_Rdx + \int_{0}^{t}\int_{C_R} |\vu|^2\, \Delta\varphi_R\, dxds + \int_{0}^{t}\int_{C_R}  \Big( |\vu|^2 \vu  + 2 P \vu\Big) \,  \vn\varphi_R\, dxds\\
\leq & \| \vu_0 \|^2_{L^2} + \int_{0}^{t}\int_{C_R}  |\vu|^2\, \Delta\varphi_R\, dxds + \int_{0}^{t}\int_{C_R} \Big( |\vu|^2 \vu  + 2 P \vu\Big)\,  \vn\varphi_R\, dxds.
\end{split}
\end{equation}

In the forthcoming technical lemmas, we estimate each term on the right-hand side. To this end, we will use a generic constant $C>0$, which may change from one line to the next but it is independent of the parameter $R$.

\begin{Lemma}\label{Lem-Tech-Energy-1}  As $\vu \in M^{p,\gamma}_{t,x}\big( [0,T]\times \Rt \big)$, with $0<\gamma<3\leq p<+\infty$ satisfying (\ref{Relationship}), for any $0<t\leq T$  we have
	\[ \left| \int_{0}^{t}\int_{C_R}  |\vu|^2 \, \Delta\varphi_R\, dxds  \right| \leq C\, T^{\frac{p-2}{p}}\, R^{-\frac{1}{3}}\,  \| \vu \|^2_{M^{p,\gamma}_{t,x}}.  \]
\end{Lemma}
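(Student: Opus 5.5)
The estimate will follow from the localization of $\Delta\varphi_R$ on the annulus $C_R$ and a single application of H\"older's inequality in the joint variables $(s,x)$. The relation (\ref{Relationship}) is used only at the end, to compare powers of $R$.

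\textbf{Step 1: extract the decay of $\Delta\varphi_R$.} Since $\|\Delta\varphi_R\|_{L^\infty}\leq C R^{-2}$, $C_R\subset B_R$ and $t\leq T$, we have
\[
\left|\int_0^t\int_{C_R}|\vu|^2\Delta\varphi_R\,dxds\right|\leq \frac{C}{R^2}\int_0^T\int_{B_R}|\vu|^2\,dxds .
\]

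\textbf{Step 2: H\"older and the definition of the norm.} Because $p\geq 3>2$, we apply H\"older's inequality on $[0,T]\times B_R$ with exponents $\frac{p}{2}$ and $\frac{p}{p-2}$. Using $|[0,T]\times B_R|\simeq T R^3$, this gives
\[
\int_0^T\int_{B_R}|\vu|^2\,dxds\leq \left(\int_0^T\int_{B_R}|\vu|^p\,dxds\right)^{\frac 2p}(C\,T R^3)^{\frac{p-2}{p}} .
\]
Since $R\geq 1$, definition (\ref{Local-Morrey-tx}) bounds the first factor by $R^{\frac{2\gamma}{p}}\|\vu\|^2_{M^{p,\gamma}_{t,x}}$. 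Altogether,
\[
\left|\int_0^t\int_{C_R}|\vu|^2\Delta\varphi_R\,dxds\right|\leq C\,T^{\frac{p-2}{p}}\,R^{\theta}\,\|\vu\|^2_{M^{p,\gamma}_{t,x}},\qquad \theta:=-2+\frac{2\gamma}{p}+3-\frac 6p .
\]

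\textbf{Step 3: the exponent.} We can rewrite
\[
\theta=2\left(\frac{\gamma}{p}-\frac 3p+\frac 12\right).
\]
By (\ref{Relationship}), $\frac{\gamma}{p}-\frac3p<-\frac23$, so
\[
\theta<2\left(-\frac23+\frac12\right)=-\frac13 .
\]
Since $R\geq 1$, it follows that $R^\theta\leq R^{-\frac13}$, which is the claimed bound.

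No real obstacle is expected. The only points requiring care are that the H\"older exponent $\frac p2$ is at least $1$, which is guaranteed by $p\geq 3$, and that the restriction $R\geq 1$ is used both in the definition of the norm and in the final comparison of powers.
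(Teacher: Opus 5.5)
Your proof is correct and is essentially the paper's argument: bound $|\Delta\varphi_R|$ by $CR^{-2}$, apply H\"older with exponents $\frac{p}{2}$ and $\frac{p}{p-2}$, and use (\ref{Relationship}) to show that the exponent $2\left(\frac{\gamma}{p}-\frac{3}{p}+\frac{1}{2}\right)$ is below $-\frac{1}{3}$. The only cosmetic difference is that the paper applies H\"older first in space and then in time, whereas you apply it once on $[0,T]\times B_R$; this produces the same factor $T^{\frac{p-2}{p}}R^{\frac{3(p-2)}{p}}$.
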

\begin{proof}  As $\| \Delta \varphi_R \|_{L^\infty} \leq \frac{C}{R^2}$,  $p\geq 3$, and applying H\"older inequalities in the spatial variable (with $1=\frac{2}{p}+ \frac{p-2}{p}$), for $0\leq t \leq T$ we have 
	\begin{equation*}
	\begin{split}
&\, 	\left| \int_{0}^{t}\int_{C_R}  |\vu|^2\, \Delta\varphi_R\, dxds  \right| \leq \,  \frac{C}{R^2}\, \int_{0}^{t}\int_{C_R}  |\vu|^2\, dxds\leq \, C\, R^{1-\frac{6}{p}} \int_{0}^{t} \left(  \int_{C_R}  |\vu|^p\, dx\right)^{\frac{2}{q}} ds \\
	\leq  & \,  C\, R^{1-\frac{6}{p}} \left( \int_{0}^{t}  \int_{C_R}  |\vu|^p\, dxds \right)^{\frac{2}{p}} t^{\frac{p-2}{p}}\leq \,  C\, R^{1-\frac{6}{p}} \left( \int_{0}^{t}  \int_{C_R}  |\vu|^p\, dxds \right)^{\frac{2}{p}} T^{\frac{p-2}{p}}.
	\end{split}
	\end{equation*}

Now, for  the parameter  $\gamma$, inside the last integral we multiply and divide by $R^\gamma$, yielding that 
\begin{equation*}
\begin{split}
&\, C\, R^{1-\frac{6}{p}} \left( \int_{0}^{t}  \int_{C_R}  |\vu|^p\, dxds \right)^{\frac{2}{p}} T^{\frac{p-2}{p}}\\
=&\, C\, R^{1-\frac{6}{p}+\frac{2\gamma}{p}}\, \left( \frac{1}{R^\gamma} \int_{0}^{t}  \int_{C_R}  |\vu|^p\, dxds \right)^{\frac{2}{p}} T^{\frac{p-2}{p}}\\
=&\, C\, R^{2\left(\frac{1}{2}-\frac{3}{p}+\frac{\gamma}{p}\right)}\, \left( \frac{1}{R^\gamma} \int_{0}^{t}  \int_{C_R}  |\vu|^p\, dxds \right)^{\frac{2}{p}} T^{\frac{p-2}{p}}\\
=&\, C\, R^{2\left(\frac{\gamma}{p}-\frac{3}{p}+\frac{2}{3}-\frac{1}{6}\right)}\, \left( \frac{1}{R^\gamma} \int_{0}^{t}  \int_{C_R} |\vu|^p\, dxds \right)^{\frac{2}{p}} T^{\frac{p-2}{p}}.
\end{split}
\end{equation*}

Recall that by (\ref{Relationship}) we have $\ds{\frac{\gamma}{p}-\frac{3}{p}+ \frac{2}{3}< 0}$.  Using this fact, and the definition of the norm $\| \cdot \|_{M^{p,\gamma}_{t,x}}$ given in (\ref{Local-Morrey-tx}), for any $R\geq 1$  we finally obtain
\begin{equation*} C\, R^{2\left(\frac{2}{3}-\frac{3}{p}+\frac{\gamma}{p}-\frac{1}{6}\right)}\, \left( \frac{1}{R^\gamma} \int_{0}^{t}  \int_{C_R}  |\vu|^p\, dxds \right)^{\frac{2}{p}} T^{\frac{p-2}{p}} \leq  C\, R^{-\frac{1}{3}}\, \| \vu \|^2_{M^{p,\gamma}_{t,x}}\, T^{\frac{p-2}{p}}. 
\end{equation*} 
\end{proof}	 	

\begin{Remark} Note that the control proven in Lemma \ref{Lem-Tech-Energy-1} actually holds under the more relaxed relationship between $p$ and $\gamma$: $\frac{\gamma}{p}-\frac{3}{p}+\frac{2}{3}< \frac{1}{6}$. Nevertheless, the more restrictive relationship (\ref{Relationship}) naturally appears in the control of the following expression.
\end{Remark}	

\begin{Lemma}\label{Lem-Tech-Energy-2} As $\vu\in M^{p,\gamma}_{t,x}\big( [0,T]\times \Rt \big)$, with $0<\gamma<3\leq p<+\infty$ satisfying (\ref{Relationship}),  for any $0<t\leq T$ we have
	\begin{equation*}
\left|  \int_{0}^{t}\int_{C_R}  \Big( |\vu|^2 \vu  + 2 P \vu\Big)\,  \vn\varphi_R\, dxds  \right| 
	\leq \, C\, T^{\frac{p-3}{p}}\,  R^{3\left( \frac{\gamma}{p}-\frac{3}{p}+\frac{2}{3}\right)}\, \| \vu \|^3_{M^{p,\gamma}_{t,x}}.
	\end{equation*}
\end{Lemma}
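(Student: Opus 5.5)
The plan is to bound the cubic term and the pressure term separately, using $\|\vn\varphi_R\|_{L^\infty}\leq C/R$, H\"older's inequality in space-time, and the definition (\ref{Local-Morrey-tx}) of the norm. For the pressure, I first invoke Proposition \ref{Prop:Pressure}. Since (\ref{Relationship}) forces $\gamma<3-\frac{2p}{3}\leq 1<\frac32$ and $p\geq 3>2$, its hypotheses hold. Hence $\vn P=\vn Q$ with $Q=\sum_{i,j}\mathcal{R}_i\mathcal{R}_j(u_iu_j)$. Because $\vn\varphi_R$ integrates $P\vu$ only through $\text{div}(P\vu)=\vu\cdot\vn P$ (using $\text{div}\,\vu=0$), I may replace $P$ by $Q$ in the integral. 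Any function of time that $P-Q$ might contain does not contribute, since $\int \vu\cdot\vn\varphi_R\,dx=0$.

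For the cubic term, $|\vu|^3\leq |\vu|^3$ on $C_R\subset B_R$, so
\[
\frac{C}{R}\int_0^t\int_{B_R}|\vu|^3\,dxds\leq \frac{C}{R}\,T^{\frac{p-3}{p}}\,(R^3)^{\frac{p-3}{p}}\left(\int_0^T\int_{B_R}|\vu|^p\right)^{\frac3p}\leq C\,T^{\frac{p-3}{p}}R^{-1+3-\frac9p+\frac{3\gamma}{p}}\|\vu\|^3_{M^{p,\gamma}_{t,x}}.
\]
Here I use H\"older with $1=\frac3p+\frac{p-3}{p}$ on $[0,t]\times B_R$. The exponent is $2-\frac9p+\frac{3\gamma}{p}=3\left(\frac{\gamma}{p}-\frac3p+\frac23\right)$, exactly the one claimed.

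For the pressure term, I use H\"older in the same way with $|Q|\,|\vu|$, putting $Q\in L^{p/2}$ and $\vu\in L^p$ on $[0,t]\times B_R$ with the remaining factor $\frac{p-3}{p}$. This gives
\[
\frac{C}{R}\,(TR^3)^{\frac{p-3}{p}}\,\|Q\|_{L^{p/2}([0,T]\times B_R)}\,\|\vu\|_{L^p([0,T]\times B_R)}.
\]
The key point is then the local bound $\|Q\|_{L^{p/2}([0,T]\times B_R)}\leq C R^{2\gamma/p}\|\vu\|^2_{M^{p,\gamma}_{t,x}}$, that is, $Q\in M^{p/2,\gamma}_{t,x}$. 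To get it, note that $\vu\otimes\vu\in M^{p/2,\gamma}_{t,x}$ with norm at most $\|\vu\|^2_{M^{p,\gamma}_{t,x}}$, which follows immediately from the definition. Then apply the second point of Lemma \ref{Lem-Tech-2} with exponent $p/2>1$ and $0<\gamma<3$ to the Riesz transforms. Combining the powers of $R$ gives again $R^{-1+\frac{3(p-3)}{p}+\frac{2\gamma}{p}+\frac{\gamma}{p}}=R^{3(\frac{\gamma}{p}-\frac3p+\frac23)}$.

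The main obstacle is the pressure: justifying the replacement of $P$ by $Q$ inside the local integral, and checking that Lemma \ref{Lem-Tech-2} applies at exponent $p/2$. This requires $p/2>1$, which holds since $p\geq3$. I would treat the borderline $p=3$, where the H\"older factor $T^{0}$ disappears, as the same computation.
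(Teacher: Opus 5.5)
Your proposal is correct and follows the paper's proof. You split into the cubic and pressure terms, use $\|\vn\varphi_R\|_{L^\infty}\leq C/R$ with H\"older, replace $P$ by $Q$ via Proposition \ref{Prop:Pressure}, and control $\|Q\|_{M^{p/2,\gamma}_{t,x}}\leq C\|\vu\|^2_{M^{p,\gamma}_{t,x}}$ by the second point of Lemma \ref{Lem-Tech-2}. The differences are minor: you use a single space-time H\"older step instead of the paper's spatial-then-temporal one, and you make explicit two points the paper leaves implicit, namely that (\ref{Relationship}) forces $\gamma<1<\frac32$ so the Proposition applies, and that a time-dependent constant in $P-Q$ drops out because $\int\vu\cdot\vn\varphi_R\,dx=0$.
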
	
\begin{proof} 
We split
\begin{equation*}
\left|  \int_{0}^{t}\int_{C_R}  \text{div}\Big( |\vu|^2 \vu  + 2 P \vu\Big)\,  \vn\varphi_R\, dxds  \right| \leq \,  \int_{0}^{t}\int_{C_R}  |\vu|^2\,|\vu|\, dx ds  + 2\int_{0}^{t}\int_{C_R} |P||\vu|\, | \vn \varphi_R |\, dx ds=:  I_1+I_2,
\end{equation*}	
where we must estimate each term on the right-hand side.  

\medskip

For $I_1$, using the estimate $\| \vn \varphi_R \|_{L^\infty}\leq \frac{C}{R}$ together with the H\"older inequalities in the spatial variable (with $1=\frac{2}{p}+ \frac{p-2}{p}$), we have 
\begin{equation*}
I_1 \leq \, \frac{C}{R} \, \int_{0}^{t} \left(  \int_{C_R} |\vu|^p \, dx \right)^{\frac{2}{p}}\, \left( \int_{C_R} |\vu|^{\frac{p}{p-2}}\, dx  \right)^{\frac{p-2}{p}}\, ds. 
\end{equation*}
For the second term above,  as $3\leq p<+\infty$ we have $\frac{p}{p-2}\leq 3 \leq p$. Then, we can write
\[ \left( \int_{C_R} |\vu|^{\frac{p}{p-2}}\, dx  \right)^{\frac{p-2}{p}} \leq C\, R^{3\left( \frac{p-2}{p} - \frac{1}{p}\right)}\, \left( \int_{C_R}|\vu|^p\, dx  \right)^{\frac{1}{p}} =C\, R^{3-\frac{9}{p}}\, \left( \int_{C_R}|\vu|^p\, dx  \right)^{\frac{1}{p}}.  \] 
Returning to the previous estimate, rearranging terms  we find that
\begin{equation*}
\begin{split}
I_1\leq  &\,C\, R^{2-\frac{9}{p}} \int_{0}^{t} \left( \int_{C_R} |\vu|^p\, dx \right)^{\frac{2}{p}}\, \left( \int_{C_R}|\vu|^p\, dx \right)^{\frac{1}{p}}\, ds\\
=&\,C\, R^{2-\frac{9}{p}+\frac{3\gamma}{p}} \int_{0}^{t} \left( \frac{1}{R^\gamma} \int_{C_R} |\vu|^p\, dx \right)^{\frac{2}{p}}\, \left(\frac{1}{R^\gamma} \int_{C_R}|\vu|^p\, dx \right)^{\frac{1}{p}}\, ds\\
=&\, C\,  R^{3\left(\frac{\gamma}{p}-\frac{3}{p}+\frac{2}{3}\right)} \int_{0}^{t} \left( \frac{1}{R^\gamma} \int_{C_R} |\vu|^p\, dx \right)^{\frac{2}{p}}\, \left(\frac{1}{R^\gamma} \int_{C_R}|\vu|^p\, dx \right)^{\frac{1}{p}}\, ds.
\end{split}
\end{equation*}
Applying the H\"older inequality in the time variable (with $1=\frac{2}{p}+\frac{1}{p}+\frac{p-3}{p}$) and the definition of $\|\cdot \|_{M^{p,\gamma}_{t,x}}$ given in (\ref{Local-Morrey-tx}),  for any $0\leq t \leq T$ we obtain 
\begin{equation}\label{Estim-I1}
\begin{split}
I_1 \leq &\, C\, R^{3\left(\frac{\gamma}{p}-\frac{3}{p}+\frac{2}{3}\right)}\left(\frac{1}{R^\gamma} \int_{0}^{t}\int_{C_R} |\vu|^p\, dxds \right)^{\frac{2}{p}}\,  \left( \frac{1}{R^\gamma}  \int_{0}^{t}\int_{C_R} |\vu|^p\, dxds \right)^{\frac{1}{p}} t^{\frac{p-3}{p}}\\
\leq &\, C\, R^{3\left(\frac{\gamma}{p}-\frac{3}{p}+\frac{2}{3}\right)}  \left(\frac{1}{R^\gamma} \int_{0}^{t}\int_{C_R} |\vu|^p\, dxds \right)^{\frac{2}{p}}  \left(\frac{1}{R^\gamma} \int_{0}^{t}\int_{C_R} |\vu|^p\, dxds \right)^{\frac{1}{p}}\, T^{\frac{p-3}{p}}\\
\leq &\, C\, R^{3\left(\frac{\gamma}{p}-\frac{3}{p}+\frac{2}{3}\right)}\, \| \vu\|^3_{M^{p,\gamma}_{t,x}}\, T^{\frac{p-3}{p}}. 
\end{split}
\end{equation}

For $I_2$,  first recall that by Proposition \ref{Prop:Pressure} we have the identity   $\vn P=\vn Q$, where the term $Q$ is  given in (\ref{Q}). Therefore, in equation  (\ref{NS}), we can write $\vn Q$ instead of $\vn P$ yielding after some standard computation  that 
\[ I_2:= 2 \int_{0}^{t}\int_{C_R} |Q||\vu||\vn \varphi_R|\, dxds. \]

Using the same arguments as the term $I_1$, we write 
\begin{equation*}
\begin{split}
 I_2 \leq  &\, \frac{C}{R}\, \int_{0}^{t} \left( \int_{C_R} |Q|^{\frac{p}{2}}\, dx \right)^{\frac{2}{p}}\, \left(  \int_{C_R}|\vu|^{\frac{p}{p-2}}\, dx \right)^{\frac{p-2}{p}}\, ds \\
 \leq &\, C\, R^{2-\frac{9}{p}}\, \int_{0}^{t} \left( \int_{C_R} |Q|^{\frac{p}{2}}\, dx \right)^{\frac{2}{p}}\, \left(  \int_{C_R}|\vu|^{p}\, dx \right)^{\frac{1}{p}}\, ds \\
 \leq &\, C\, R^{3\left( \frac{2}{3}-\frac{3}{p}+\frac{\gamma}{p}\right)}\, \int_{0}^{t} \left( \frac{1}{R^\gamma} \int_{C_R} |Q|^{\frac{p}{2}}\, dx \right)^{\frac{2}{p}}\, \left(  \frac{1}{R^{\gamma}} \int_{C_R}|\vu|^{p}\, dx \right)^{\frac{1}{p}}\, ds \\
 \leq &\, C\,  R^{3\left(\frac{\gamma}{p}-\frac{3}{p}+\frac{2}{3}\right)} \| Q \|_{M^{\frac{p}{2},\gamma}_{t,x}}\, \| \vu \|_{M^{p,\gamma}_{t,x}}\,  T^{\frac{p-3}{p}}. 
\end{split}
\end{equation*}

To control the term $\| Q \|_{M^{\frac{p}{2},\gamma}_{t,x}}$, using the identity (\ref{Q}) and second point of Lemma \ref{Lem-Tech-2}, we obtain
\begin{equation*}
\| Q \|_{M^{\frac{p}{2},\gamma}_{t,x}} =\, \left\| \sum_{i,j=1}^{3} \mathcal{R}_i \mathcal{R}_j (u_i u_j) \right\|_{M^{\frac{p}{2},\gamma}_{t,x}} \leq C \,\| \vu \otimes \vu \|_{M^{\frac{p}{2},\gamma}_{t,x}} \leq \, C\,  \| \vu \|^2_{M^{p,\gamma}_{t,x}}.
\end{equation*}

With this control, we have
\begin{equation}\label{Estim-I3}
I_2 \leq C\, R^{3\left( \frac{2}{3}-\frac{3}{p}+\frac{\gamma}{p}\right)} \, \| \vu\|^3_{M^{p,\gamma}_{t,x}} \, T^{\frac{p-3}{p}}.
\end{equation}

Finally, the desired estimate now follows from estimates (\ref{Estim-I1})  and (\ref{Estim-I3}).
 \end{proof}

{\bf End of the proof of Theorem \ref{Th1}}. Once we dispose of inequalities proven in Lemmas \ref{Lem-Tech-Energy-1} and \ref{Lem-Tech-Energy-2}, returning to estimate (\ref{Estim-Base}) we find that 
\begin{equation*}
\begin{split}
&\, \int_{B_{\frac{R}{2}}} |\vu(t, \cdot)|^2 \,dx  + 2\int_{0}^{t}\int_{B_{\frac{R}{2}}}  |\vn \otimes \vu(s,\cdot) |^2 \, dxds \\
\leq & \| \vu_0 \|^2_{L^2} + C\, T^{\frac{p-2}{p}}\, R^{-\frac{1}{3}} \, \| \vu \|^2_{M^{p,\gamma}_{t,x}}+ C\, T^{\frac{p-3}{p}}\, R^{3\left(\frac{\gamma}{p}-\frac{3}{p}+\frac{2}{3}\right)} \| \vu \|^3_{M^{p,\gamma}_{t,x}}.
\end{split}
\end{equation*}	 

Consequently, letting $R\to +\infty$ and using the assumption (\ref{Relationship}) we find that 
\[ \lim_{R \to +\infty} \left(  C\, T^{\frac{p-2}{p}}\, R^{-\frac{1}{3}} \, \| \vu \|^2_{M^{p,\gamma}_{t,x}}+ C\, T^{\frac{p-3}{p}}\, R^{3\left(\frac{\gamma}{p}-\frac{3}{p}+\frac{2}{3}\right)} \| \vu \|^3_{M^{p,\gamma}_{t,x}}\right)=0, \] hence
 we obtain the desired global energy inequality (\ref{Energy-Inequality}). We thus  conclude that  $\vu \in L^\infty_t L^2_x \cap L^2_t \dot{H}^1_x\big([0,T]\times \Rt\big)$.   Theorem \ref{Th1} is now proven. 

\section{Proof of Corollary \ref{Cor:Uniqueness}}  
 From Theorem \ref{Th1} we known that $\vu \in M^{p,\gamma}_{t,x}\big( [0,T]\times \Rt \big)$, is a Leray solution of equations (\ref{NS}).  Here, recall that the parameters $\gamma$ and $p$ satisfy the relationship given in (\ref{Relationship}), in particular we have  $3\leq p < \frac{3(3-\gamma)}{2}<\frac{9}{2}$. 
 
 \medskip
 
 On the other hand,  from the additional assumption (\ref{Assumption-Uniqueness-Regularity}) and the definition of the space $M^{\delta,q}_{t,x}\big( [0,T_1]\times \Rt \big)$ given in expression (\ref{Local-Morrey-tx}), for any 
bounded domain $\Omega \subset \Rt$ we have that 
\[ \vu \in L^q \big( [0,T_1], L^q(\Omega) \big). \]
Now, recall that the parameter $q$ satisfies the relationship given in (\ref{Assumption-Uniqueness-Regularity}). Then, as $p<\frac{9}{2}$ we have $q\geq \frac{3p}{p-2}>p$.

 \medskip
 
 Therefore, we can write
 \begin{equation}\label{Ladyzhenskaya-Prodi-Serrin-Uniq-Reg}
 \vu \in L^q \big( [0,T_1], L^q(\Omega) \big) \subset L^p\big( [0,T_1], L^{\frac{3p}{p-2}}(\Omega) \big),
 \end{equation}
 where the parameters $p$ and $\frac{3p}{p-2}$ satisfy the Ladyzhenskaya-Prodi-Serrin relationship given in (\ref{Ladyzhenskaya - Prodi - Serrin criterion}). 
 
 \medskip

 Using this information and the fact that $\vu$ is a Leray solution of equations (\ref{NS}), in particular it satisfies the global energy inequality (\ref{Energy-Inequality}),   from \cite[Theorem $4.2$]{Galdi-2000} we obtain that $\vu$ is the unique Leray solution associated with $\vu$ on $[0,T_1]\times \Omega$. Finally, as the bounded domain $\Omega$ is arbitrary, we can deduce that $\vu$ is the unique Leray solution on $[0,T_1]\times \Rt$.  Corollary \ref{Cor:Uniqueness} is thus proven.

 \section{Proof of Corollary \ref{Cor:Regularity}} Following the same arguments above, we also have that the Leray solution $\vu$ satisfies (\ref{Ladyzhenskaya-Prodi-Serrin-Uniq-Reg}). In this case, we choose $\Omega$ as a bounded domain uniformly  of class $\mathcal{C}^{\infty}$. 
 
 \medskip
 
 Then, from \cite[Theoem $5.2$]{Galdi-2000} we obtain that $\vu \in \mathcal{C}^{\infty}\big(]0,T_1] \times \overline{\Omega}\big)$.  As before, as the regular domain $\Omega$ is arbitrary, we deduce that $\vu \in \mathcal{C}^{\infty}\big( ]0,T_1] \times \Rt \big)$. Corollary \ref{Cor:Regularity} is now proven.

\section{Appendix}\label{AppendixA}  
\subsection{Proof of embedding (\ref{Embedding1})} It is sufficient to prove the last embedding. Let $R\geq 1$.  For $q<r\leq +\infty$ from \cite[Proposition $1.1.10$]{Chamorro-book} we have 
\[ \int_{|x|<R} | \phi(t,x)|^p\, dx \leq C\, R^{3\left(1-\frac{p}{q} \right)}\, \| \phi(t,\cdot)\|^p_{L^{q,\infty}} \leq C\,  R^{3\left(1-\frac{p}{q} \right)}\, \| \phi(t,\cdot)\|^p_{L^{q,r}}, \]
hence,  we write
\[ \frac{1}{R^\gamma} \int_{|x|<R} | \phi(t,x)|^p\, dx \leq C\,  R^{3\left(1-\frac{p}{q} \right)-\gamma}\, \| \phi(t,\cdot)\|^p_{L^{q,r}}. \]
In the last estimate we need that $3\left(1-\frac{p}{q} \right)-\gamma \leq 0$. This fact is  equivalent to the inequality
\[ \frac{2}{3}-\frac{3}{q}\leq \frac{\gamma}{p}-\frac{3}{p}+\frac{2}{3}, \]
 which is finally ensured by the relation given in (\ref{Relationship}). 
\begin{Remark}
	Note that $\frac{2}{3}-\frac{3}{q}<0$ imposes the constraint $q<\frac{9}{2}$. 
\end{Remark}	

Once we have  $3\left(1-\frac{p}{q} \right)-\gamma \leq 0$, we return to the previous estimate.   As $R\geq 1$ and integrating on the interval of time $[0,T]$, we obtain 
\[ \frac{1}{R^\gamma} \int_{0}^{T} \int_{|x|<R} | \phi(t,x)|^p\, dx dt  \leq C\,  \int_{0}^{T}  \| \phi(t,\cdot)\|^p_{L^{q,r}} \, dt. \]
\subsection{Proof of embedding (\ref{Embedding2})} For $p<q$ and the parameter $\gamma$, we assume that $\gamma> 3 \left( 1-\frac{p}{q}\right)$. 
\begin{Remark}
	As $\gamma$ and $p$ always satisfy the relationship (\ref{Relationship}), from this last assumption we have 
	\[ \frac{3}{p}  \left(1- \frac{p}{q} \right) - \frac{3}{p}+\frac{2}{3}< \frac{\gamma}{p}-\frac{3}{p}+\frac{2}{3}<0, \]
	hence we obtain that  $q<\frac{9}{2}$. 
\end{Remark}

Then, for any $R\geq 1$, using the norm $\| \cdot \|_{\dot{M}^{p,q}}$ defined in (\ref{Homogeneous-Morrey}), we can write
\begin{equation*}
\begin{split}
&\, \left( \frac{1}{R^\gamma} \int_{0}^{T} \int_{|x|<R} |\phi(t,x)|^p\, dxdt \right)^{\frac{1}{p}} \leq \, \left( \frac{1}{R^{3\left(1-\frac{p}{q}\right)}} \int_{0}^{T}\int_{|x|<R}|\phi(t,x)|^p\, dx dt \right)^{\frac{1}{p}}\\
\leq &\, \left( \int_{0}^{T} R^{\frac{3p}{q}}\, \frac{1}{R^3} \int_{|x|<R}|\phi(t,x)|^p\, dx dt  \right)^{\frac{1}{p}}\leq \, \left( \int_{0}^{T} \left[ R^{\frac{3}{q}} \left( \frac{1}{R^3} \int_{|x|<R} |\phi(t,x)|^p\, dx \right)^{\frac{1}{p}} \right]^p\, dt  \right)^{\frac{1}{p}}\\
\leq &\, C\, \left( \int_{0}^{T} \| \phi(t,\cdot)\|^p_{\dot{M}^{p,q}}\, dt  \right)^{\frac{1}{p}}. 
\end{split}
\end{equation*}
\subsection{Proof of Lemma \ref{Lem-Tech-Parabolic-Morrey}}. We follow similar ideas as above.  Now, for $p<q$ we must assume that $\gamma> 5 \left( 1- \frac{p}{q}\right)$. 
\begin{Remark}
	As before as $\gamma,p$ satisfy (\ref{Relationship}) we have 
\[ \frac{5}{p}\left( 1- \frac{p}{q} \right)- \frac{3}{p}+\frac{2}{3}< \frac{\gamma}{p}-\frac{3}{p}+\frac{2}{3}<0,  \]
and then
\[ \frac{2}{p}+\frac{2}{3}<\frac{5}{q} \]
$\frac{2}{p}+\frac{2}{3}<\frac{5}{q}$. Additionally, the condition $p<q$ implies that  
\[  \frac{2}{q}+ \frac{2}{3}< \frac{2}{p}+ \frac{2}{3}< \frac{5}{q}, \]
hence we have the constrain  $q<\frac{9}{2}$. 
  \end{Remark}	

Let $R\geq 1$ and $0<T \leq 2$. Observe that this last condition implies $\sqrt{\frac{T}{2}}\leq 1 \leq R$, which we will use later to justify some computations. 

\medskip

Now, for $R$ and $T$ given,  let $t_0\in \R$ such that $T-R^2 \leq t_0 \leq R^2$.  Note that, on the one hand,  from these estimates we have $[0,T]\subset [t_0-R^2, t_0+R^2]$. On the other hand, from these estimates we also have $T\leq 2R^2$, hence $\sqrt{\frac{T}{2}}\leq R$. Nevertheless,  this fact does not imposes any constraint on $R\geq 1$ since, as previously mentioned, we have $\sqrt{\frac{T}{2}}\leq 1 \leq R$. 

\medskip

Then, for any $R\geq 1$,  using the norm $\| \cdot \|_{\mathcal{M}^{p,q}_{t,x}}$ defined in expression (\ref{Parabollic-Morrey}),  we write
\begin{equation*}
\begin{split}
&\, \left( \frac{1}{R^\gamma} \int_{0}^{T}\int_{|x|<R} |\phi(t,x)|^p\, dx dt  \right)^{\frac{1}{p}} \leq \, \left( \frac{1}{R^{5\left(1-\frac{p}{q}\right)}} \int_{0}^{T}\int_{|x|<R}|\phi(t,x)|^p\, dxdt \right)^{\frac{1}{p}}\\
\leq&\, \left( \frac{1}{R^{5\left(1-\frac{p}{q}\right)}} \int_{t_0-R^2}^{t_0+R^2} \int_{|x|<R} \left|\mathds{1}_{[0,T]}(t) \phi(t,x)\right|^p\, dxdt  \right)^{\frac{1}{p}} \leq C\, \left\| \mathds{1}_{[0,T]}(\cdot) \phi \right\|_{\mathcal{M}^{p,q}_{t,x}}. 
\end{split}
\end{equation*}

Lemma \ref{Lem-Tech-Parabolic-Morrey} is proven. 

\medskip

\paragraph{{\bf Statements and Declaration}}
Data sharing does not apply to this article, as no datasets were generated or analyzed during the current study. In addition, the author declares that he has no conflicts of interest.

\vspace{5mm} 

\end{document}